\documentclass[12pt]{amsart}
\usepackage{lmodern,graphicx}
\usepackage{latexsym}
\usepackage{amsfonts}
\usepackage{pgfplots}
\usepackage{tikz}
\usetikzlibrary{arrows,shapes,positioning}
\usetikzlibrary{decorations.markings}
\tikzstyle arrowstyle=[scale=1]
\tikzstyle directed=[postaction={decorate,decoration={markings,
    mark=at position .65 with {\arrow[arrowstyle]{stealth}}}}]
\tikzstyle reverse directed=[postaction={decorate,decoration={markings,
    mark=at position .45 with {\arrowreversed[arrowstyle]{stealth};}}}]
\usetikzlibrary{backgrounds}
\usepackage[german,english]{babel}

\usepackage{amsmath,amsthm,amssymb}

\newtheorem{question}{Question}
\newtheorem{proposition}{Proposition}[section]
\newtheorem{theorem}[proposition]{Theorem}

\newtheorem{thm}{Theorem}[section]
\newtheorem{lemma}[thm]{Lemma}
\newtheorem{corollary}[thm]{Corollary}
\newtheorem{conjecture}[thm]{Conjecture}

\begin{document}

\bigskip

\begin{center}
\Large
Andrews-Curtis and Nielsen equivalence relations on some infinite groups
\normalsize

\bigskip

Aglaia Myropolska\footnote{The author acknowledges the support of the Swiss National Science Foundation, grant 200021\_144323.}

\end{center}

\bigskip

The Andrews-Curtis conjecture asserts that, for a free group $F_n$ of rank $n$ and a free basis $(x_1,...,x_n)$, any normally generating tuple $(y_1,...,y_n)$ is Andrews-Curtis equivalent to $(x_1,...,x_n)$. This equivalence corresponds to the actions of $\operatorname{Aut}F_n$ and of $F_n$ on normally generating $n$-tuples. The equivalence corresponding to the action of $\operatorname{Aut}F_n$ on generating $n$-tuples is called Nielsen equivalence. The conjecture for arbitrary finitely generated group has its own importance to analyse potential counter-examples to the original conjecture. We study the Andrews-Curtis and Nielsen equivalence in the class of finitely generated groups for which every maximal subgroup is normal, including nilpotent groups and Grigorchuk groups.

\section{Introduction}
The famous Andrews-Curtis conjecture \cite{AC,Lub} can be stated as follows:
\begin{conjecture}[The Andrews-Curtis conjecture] Let $F_n$ be a free group of rank $n\geq 2$. If  $\{x_1,...,x_n\}$ is a free basis and $\{r_1,...,r_n\}$ is a normally generating set of $F_n$, then $(r_1,...,r_n)$ and $(x_1,...,x_n)$ are Andrews-Curtis equivalent.
\end{conjecture}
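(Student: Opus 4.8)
The plan is to attack the conjecture in two stages: first trivialize the abelianized data, then handle the commutator corrections. For the first stage, observe that the Andrews--Curtis moves $r_i \mapsto r_i^{-1}$, $r_i \mapsto r_i r_j$ (for $j \neq i$), and $r_i \mapsto w r_i w^{-1}$ descend, under the abelianization $F_n \twoheadrightarrow \mathbb{Z}^n$, to the elementary transvections and sign changes on $\mathbb{Z}^n$, with conjugation acting trivially. Since $(r_1,\dots,r_n)$ normally generates $F_n$, its image generates $\mathbb{Z}^n$, so the matrix of exponent sums lies in $\operatorname{GL}_n(\mathbb{Z})$; as the elementary transvections together with a sign change generate $\operatorname{GL}_n(\mathbb{Z})$, I can apply Andrews--Curtis moves to arrange $r_i \equiv x_i \pmod{[F_n,F_n]}$ for every $i$. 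This reduces the conjecture to the case of a normally generating tuple whose image in the abelianization is already the standard basis.

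For the second stage I would work up the lower central series $\gamma_k = \gamma_k(F_n)$, where $\gamma_2 = [F_n,F_n]$ and $\bigcap_k \gamma_k = 1$. Write $r_i = x_i c_i$ with $c_i \in \gamma_2$. Passing to $F_n/\gamma_3$, the corrections $c_i$ lie in the free abelian group $\gamma_2/\gamma_3$, on which conjugation by $x_j$ modifies $c_i$ by a commutator term and the Nielsen moves act linearly; one checks that the available moves, combined with normal generation at this level, act by a transitive-enough family to clear the $\gamma_2/\gamma_3$-part, so that $r_i \equiv x_i \pmod{\gamma_3}$. The aim is then to iterate: having arranged $r_i \equiv x_i \pmod{\gamma_k}$, use normal generation at level $k$ to push the discrepancy into $\gamma_{k+1}$, and invoke residual nilpotence to conclude that the tuple has been driven to $(x_1,\dots,x_n)$.

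The hard part — and the reason the last step is not routine — is that the iteration yields only a sequence of moves trivializing the tuple modulo each $\gamma_k$, whereas the conjecture demands a single \emph{finite} sequence trivializing it in $F_n$ itself. Residual nilpotence does not let infinitely many level-by-level corrections be assembled into one finite sequence, precisely because the conjugation moves $r_i \mapsto w r_i w^{-1}$ by arbitrary $w$ can increase word length without bound, and no Whitehead-style peak-reduction lemma is known for the Andrews--Curtis problem; thus there is no complexity function guaranteed to decrease along a successful reduction. This is exactly the gap in which the suspected counterexamples — balanced presentations of the trivial group such as the Akbulut--Kirby and Miller--Schupp families — would live. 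A tractable case in which the iteration genuinely terminates is the class treated in this paper, where every maximal subgroup is normal: there the nilpotent-quotient obstruction is essentially the only one, the level-by-level procedure stabilizes after finitely many steps, and the Andrews--Curtis and Nielsen equivalence classes can be pinned down outright.
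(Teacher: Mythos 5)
The statement you set out to prove is the Andrews--Curtis conjecture itself, which is open; the paper states it only as a conjecture and, precisely because no proof for $F_n$ is known, proves theorems about its quotients instead. Judged as a proof attempt, your first stage is sound and standard: the AC moves do descend under abelianization to elementary transvections and sign changes (with conjugations acting trivially), the exponent-sum matrix of a normally generating tuple lies in $\operatorname{GL}_n(\mathbb{Z})$, and since those operations generate $\operatorname{GL}_n(\mathbb{Z})$ you may normalize $r_i \equiv x_i \pmod{[F_n,F_n]}$. Your second stage can also be made rigorous \emph{level by level}: clearing the corrections in each $\gamma_k/\gamma_{k+1}$ is essentially the mechanism behind the result, cited in this paper, that $\Delta_n(G)$ is connected when $G$ is the free soluble (and, by adaptation, free nilpotent) group of rank $n$ \cite{Myas} --- although as written your key step (``the available moves \dots act by a transitive-enough family to clear the $\gamma_2/\gamma_3$-part'') is asserted rather than proved, and would need the explicit commutator manipulations of the kind carried out in the proof of Theorem \ref{GACC}.

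The decisive gap is the one you candidly name yourself, and it is fatal: the iteration only shows that the tuple is AC-trivializable in every nilpotent quotient $F_n/\gamma_k$, and residual nilpotence ($\bigcap_k \gamma_k = 1$) provides no compactness or diagonalization that would assemble these infinitely many, mutually incoherent, potentially ever-longer sequences of moves into one finite sequence in $F_n$. So what you have is a correct reduction framework terminating in an honest admission that the limit step is exactly the open problem --- not a proof. One correction to your closing remark: the tractability of the class $\mathfrak{C}$ treated in this paper is not that ``the level-by-level procedure stabilizes'' for tuples in $F_n$; rather, Theorem \ref{GACC} concerns normally generating tuples \emph{in a group $G \in \mathfrak{C}$} (where normal generation coincides with generation and the abelianization controls the AC-components), and its bearing on the conjecture is negative in spirit: images in such groups, as in all finite groups by Theorem \ref{blm}, can never certify a counterexample such as the Akbulut--Kirby pairs, since those become AC-equivalent already in the abelianization. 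Nothing in the paper supplies a mechanism for making your iteration terminate in $F_n$ itself.
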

Let us give some background related to this conjecture. Let $G$ be a finitely generated group. \emph{The rank $\operatorname{rank}(G)$} of a group $G$ is the minimal number of generators of $G$. The following transformations of the set $G^n, n\geq 1$, are called \emph{elementary Nielsen moves}:
\begin{align*}
R_{ij}^{\pm}(x_1,...,x_i,..., x_j,..., x_n)&=(x_1,...,x_{i}x_j^{\pm 1},...,x_j,..., x_n),\\
L^{\pm}_{ij}(x_1,...,x_i,..., x_j,..., x_n)&=(x_1,...,x_j^{\pm 1}x_{i},...,x_j,..., x_n),\\
 I_j(x_1,...,x_j,...,x_n)&=(x_1,...,x_j^{-1},...,x_n),\\
\end{align*}
where $1\leq i,j \leq n$, $i\neq j$.
Elementary Nielsen moves transform generating sets of $G$ into generating sets. Two generating sets are called \emph{Nielsen equivalent} if one is obtained from the other by a finite chain of elementary Nielsen moves. 

Elementary Nielsen moves together with the transformations
\begin{align*} AC_{i,s}(x_1,...,x_i,...,x_n)=(x_1,...,x_i^{s},...,x_n)\end{align*} where $1\leq i\leq n,\text{ }s\in S\cup S^{-1}\subset G $, $S$ is a fixed subset of $G$, form the set of \emph{elementary Andrews-Curtis moves relative to $S$}, or shortly AC$_{S}$-moves. Elementary AC$_S$-moves transform \emph{normally generating sets} (sets which generate $G$ as a normal subgroup) into normally generating sets. Two normally generating sets are called \emph{Andrews-Curtis equivalent relative to $S$} (AC$_S$ equivalent) if one is obtained from the other by a finite chain of elementary AC$_S$-moves. In the case when $S=G$, we refer to AC-moves.

As in \cite{Kerv}, the\emph{ weight $w(G)$} of a group $G$ is the minimal number of normal generators of $G$.

Given an integer $n\geq w(G)$ and a subset $S\subset G$, the \emph{Andrews-Curtis graph} (AC-graph) $\Delta_{n,S}(G)$ of the group $G$ is defined as follows: 
\begin{itemize}
\item[-] the set of vertices consists of normally generating $n$-tuples, i.e. \begin{equation*} V_{\Delta_{n,S}}(G)=\{(g_1,...,g_n) \in G^n\mid\ll g_1,...,g_n \gg  = G\};\end{equation*}
\item[-] two vertices are connected by an edge if one of them is obtained from the other by an elementary $AC_S$-move. 
\end{itemize}
If $S=G$ we simplify the notation to $\Delta_n(G)$. Clearly if $S$ generates $G$, the graph $\Delta_{n,S}(G)$ is connected if and only if the graph $\Delta_n(G)$ is connected. The graph $\Delta_{n,S}(G)$ is a regular graph which admits loops and multiple edges. 

The Andrews-Curtis conjecture can be reformulated in terms of graphs:
\begin{conjecture}[The Andrews-Curtis conjecture] For a free group $F_n$ of rank $n\geq 2$, the Andrews-Curtis graph $\Delta_n(F_n)$ is connected.\end{conjecture}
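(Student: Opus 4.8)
The plan is to prove connectivity \emph{directly}, rather than by reformulating the conjecture: I would show that every normally generating $n$-tuple $(r_1,\dots,r_n)$ of $F_n$, i.e. every vertex of $\Delta_n(F_n)$ with $\ll r_1,\dots,r_n\gg = F_n$, can be carried to the standard basis $(x_1,\dots,x_n)$ by a finite sequence of AC-moves; since the basis is then a common neighbour of all vertices, $\Delta_n(F_n)$ is connected. The natural model is Nielsen's reduction argument, which establishes the analogous statement for \emph{generating} tuples (all generating $n$-tuples of $F_n$ are Nielsen equivalent) by repeatedly shortening total word length. Adapting that scheme, I would attempt a two-stage reduction of $(r_1,\dots,r_n)$: first normalize the abelianized data, then descend through the lower central series of $F_n$.

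Stage one. Pass to the abelianization $\pi\colon F_n \to \mathbb{Z}^n$. Since conjugation acts trivially on $\mathbb{Z}^n$, the image of the normal closure $\ll r_1,\dots,r_n\gg$ is the ordinary subgroup generated by the $\pi(r_i)$; as this closure is all of $F_n$, the $n$ elements $\pi(r_1),\dots,\pi(r_n)$ generate $\mathbb{Z}^n$ and hence form a basis, so the associated integer matrix lies in $\mathrm{GL}_n(\mathbb{Z})$. The Nielsen moves $R_{ij}^{\pm}, L_{ij}^{\pm}, I_j$ realize the elementary transvections and a sign change on this matrix, and for $n\geq 2$ these generate $\mathrm{GL}_n(\mathbb{Z})$. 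Applying the corresponding moves, I may assume $r_i \equiv x_i \pmod{[F_n,F_n]}$ for every $i$; that is, each $r_i$ equals $x_i$ times an element of the commutator subgroup.

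Stage two. I would then remove the commutator corrections by induction on the lower central series $\gamma_1 \supseteq \gamma_2 \supseteq \cdots$, where $\gamma_1 = F_n$ and $\gamma_{k+1} = [\gamma_k, F_n]$. At step $k$ one has $r_i \equiv x_i \pmod{\gamma_k}$ and seeks to use the conjugation moves $AC_{i,s}$ together with the Nielsen moves to push the discrepancy into $\gamma_{k+1}$. Each quotient $\gamma_k/\gamma_{k+1}$ is free abelian of finite rank (the associated graded is the free Lie ring on $n$ generators), the moves act on it through the induced $\mathbb{Z}[\mathbb{Z}^n]$-module structure, and so every individual step is a finite linear-algebra problem over that graded piece.

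The main obstacle — and the reason this is the celebrated \emph{open} Andrews--Curtis conjecture and not a theorem — is precisely the passage through the lower central series. There is no known length or complexity function on $F_n^{\,n}$ that the AC-moves reduce monotonically, the series does not terminate, and the corrections introduced at one graded level can enlarge lower levels and increase word length without bound, so the inductive scheme has no guaranteed termination. Equivalently, the abelianization is essentially the only known invariant separating AC-equivalence classes, and it is exhausted in Stage one; the conjectural counterexamples of Akbulut--Kirby and Miller--Schupp are exactly tuples on which every attempted length reduction stalls. I would therefore expect Stage one to go through cleanly and Stage two to be the genuine difficulty, tractable only for restricted classes of groups — as the present paper does for groups in which every maximal subgroup is normal — rather than for $F_n$ in general.
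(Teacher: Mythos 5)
You were asked to prove the Andrews--Curtis conjecture itself; the paper of course contains no proof of it --- its Conjecture 2 is simply a restatement of Conjecture 1 in graph language, the only content of the equivalence being the observation in your opening sentence: elementary AC-moves are invertible, so AC-equivalence classes are exactly the connected components of $\Delta_n(F_n)$, whose vertex set consists of the normally generating $n$-tuples and contains the basis $(x_1,\dots,x_n)$. Your proposal is therefore, correctly, not a proof, and your self-diagnosis is accurate. Stage one is sound: in the abelianization the normal closure becomes the ordinary subgroup generated, $n$ elements generating $\mathbb{Z}^n$ form a basis since $\mathbb{Z}^n$ is Hopfian, and the elementary Nielsen moves realize the transvections and a sign change, which generate $\mathrm{GL}_n(\mathbb{Z})$ for $n\geq 2$; so every vertex is AC-equivalent to a tuple $(x_1c_1,\dots,x_nc_n)$ with $c_i\in[F_n,F_n]$. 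This is precisely the point at which all known invariants are exhausted, consistent with Theorem \ref{GACC} and the result of \cite{BorLubMyas}, which say that for groups in $\mathfrak{C}$ and for finite groups the abelianization is a complete AC-invariant.

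In Stage two there is, besides the admitted gap, a technical slip worth flagging: conjugation acts \emph{trivially} on each graded piece $\gamma_k/\gamma_{k+1}$, precisely because $[\gamma_k,F_n]=\gamma_{k+1}$, so the conjugation moves do not act on the level-$k$ discrepancy through a $\mathbb{Z}[\mathbb{Z}^n]$-module structure as you assert. What actually happens is subtler: conjugating $r_i\equiv x_i \pmod{\gamma_k}$ by $g$ changes $r_i$ modulo $\gamma_{k+1}$ by $[x_i,g]$, so only conjugators $g\in\gamma_{k-1}$ move the discrepancy at level $k$, via $g\mapsto [x_i,g]$, and for general $g$ the conjugation move destroys your normalization by injecting corrections at level $2$. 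Even granting that each finite level is solvable --- which is essentially the theorem of \cite{Myas}, cited in the paper, that $\Delta_n$ of the free nilpotent (indeed free soluble) group of rank $n$ is connected --- this produces for each $k$ a finite chain of moves making the tuple congruent to the basis modulo $\gamma_{k+1}$, but these chains need not stabilize as $k\to\infty$, while an AC-equivalence must be a single finite chain; residual nilpotence $\bigcap_k \gamma_k=\{1\}$ yields convergence only in the pro-nilpotent completion, not in $F_n$. So the missing ingredient is exactly the one you name: a monotone complexity function or a finer invariant, neither of which is known --- the Akbulut--Kirby tuples (\ref{ce}) recalled in the paper being the standard candidate witnesses that the inductive scheme cannot be made to terminate.
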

\bigskip

Let $G$ be a finitely generated group and $n\geq \operatorname{rank}(G)$. The set of $n$-tuples $G^n$ can be identified with the set of homomorphisms from $F_n$ to $G$ and, therefore, the set of generating $n$-tuples can be identified with the set of epimorphisms $\operatorname{Epi}(F_n,G)$. Hence there are natural actions of the automorphism group $\operatorname{Aut}F_n$ on both $G^n$ and $\operatorname{Epi}(F_n,G)$, by precomposition. Elementary Nielsen moves can be seen as elements of $\operatorname{Aut}F_n$; moreover, by a result of Nielsen (see \cite{LS}, Chap.\ I, Prop.\ 4.1), they generate $\operatorname{Aut}F_n$. Therefore the orbits of the action of $\operatorname{Aut}F_n$ on $\operatorname{Epi}(F_n,G)$ are the \emph{Nielsen equivalence classes}. Transitivity of this action has been studied in different contexts, see \cite{Ev06,Lub,Pak}.

We define the \emph{Nielsen graph}\footnote[3]{Also called the Extended Product Replacement Graph.}  $\Gamma_n(G)$, $n\geq \operatorname{rank}(G)$, as follows: \begin{itemize} \item[-] the set of vertices consists of generating $n$-tuples, i.e. \begin{equation*}V_{\Gamma_n}(G)=\{(g_1,...,g_n)\in G^n\mid \langle g_1,...,g_n \rangle = G\};\end{equation*}
\item[-]  two vertices are connected by an edge if one of them is obtained from the other by an elementary Nielsen move.
\end{itemize}
Observe that the graph $\Gamma_n(G)$ is connected if and only if the action of $\operatorname{Aut}F_n$ on $\operatorname{Epi}(F_n,G)$ is transitive.

As an example, let us consider  a finitely generated abelian group $G$. Then a normal generating set is just a generating set and elementary Andrews-Curtis moves coincide with elementary Nielsen moves. Therefore the two graphs $\Delta_n(G)$ and $\Gamma_n(G)$ coincide. Moreover, we have the following:
\begin{theorem} [\cite{NN, DiacGrah,Oan}]
Let $G$ be a finitely generated abelian group given as \begin{equation*}G\cong \mathbb{Z}_{m_1} \times \mathbb{Z}_{m_2}\times...\times \mathbb{Z}_{m_r}\times \mathbb{Z}^s\end{equation*}
where $r,s\geq 0$, $m_1,...,m_r\geq 2$ and $m_1|m_2|...|m_r$. Then $\operatorname{rank}(G)=r+s$ and 

\begin{itemize}
\item $\Gamma_n(G)$ is connected  for $n\geq r+s+1$;
\item if $r=0$, i.\ e., $G\cong \mathbb{Z}^s$, then $\Gamma_s(G)$ is connected;
\item otherwise if $m_1=2$ then $\Gamma_{r+s}(G)$ is connected and if $m_1\neq 2$ then $\Gamma_{r+s}(G)$ has $\frac{\phi(m_1)}{2}$ connected components, where $\phi(m)$ is the Euler function (the number of integers less than $m$ which are coprime with $m$).
\end{itemize}
\label{thm:diakgrah}
\end{theorem}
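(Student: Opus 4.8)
The plan is to reduce the statement to a transitivity question in linear algebra over $\mathbb{Z}$ and then to isolate a single determinant-type invariant that controls the number of components. Since $G$ is abelian, every homomorphism $F_n\to G$ factors through the abelianization $F_n^{\mathrm{ab}}=\mathbb{Z}^n$, so $\operatorname{Epi}(F_n,G)=\operatorname{Epi}(\mathbb{Z}^n,G)$; moreover the elementary Nielsen moves act as the elementary and sign/permutation matrices, which generate $\operatorname{Aut}(\mathbb{Z}^n)=GL_n(\mathbb{Z})$. Hence the connected components of $\Gamma_n(G)$ are exactly the orbits of $GL_n(\mathbb{Z})$ acting on $\operatorname{Epi}(\mathbb{Z}^n,G)$ by precomposition, and connectivity means transitivity of this action. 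I would encode an epimorphism $\psi$ by the integer matrix of chosen lifts of $(\psi(e_1),\dots,\psi(e_n))$ to the presentation $G=\mathbb{Z}^{r+s}/\Lambda$, $\Lambda=\langle m_1e_1,\dots,m_re_r\rangle$, so that Nielsen moves become column operations on this matrix.

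The key device is the induced map on top exterior powers. A direct computation gives $\wedge^{r+s}G\cong\mathbb{Z}_{m_1}$ (the only surviving summand of $\wedge^{r+s}(T\oplus\mathbb{Z}^s)$, where $T$ is the torsion subgroup, is $\wedge^rT\otimes\wedge^s\mathbb{Z}^s$, and $\wedge^rT\cong\mathbb{Z}_{\gcd(m_i)}=\mathbb{Z}_{m_1}$), while $\wedge^nG=0$ once $n>r+s$. For $n=r+s$ an epimorphism $\psi$ therefore induces a surjection $\wedge^{n}\psi\colon\mathbb{Z}=\wedge^n\mathbb{Z}^n\to\mathbb{Z}_{m_1}$, i.e.\ an element $\det(\psi)\in(\mathbb{Z}_{m_1})^\times$; precomposing with $\alpha\in GL_n(\mathbb{Z})$ multiplies it by $\det(\alpha)=\pm1$. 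Thus $\det(\psi)\in(\mathbb{Z}_{m_1})^\times/\{\pm1\}$ is a well-defined invariant of the Nielsen class, and $\lvert(\mathbb{Z}_{m_1})^\times/\{\pm1\}\rvert$ equals $1$ if $m_1=2$ and $\phi(m_1)/2$ if $m_1\neq2$. This already yields the stated count, provided I also prove that the invariant is complete and surjective.

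For the stable range $n\geq r+s+1$ I would prove transitivity directly by a normal-form argument. Composing $\psi$ with the projection $G\to\mathbb{Z}^s$ gives a surjection $\mathbb{Z}^n\to\mathbb{Z}^s$, which by Hermite/Smith normal form (a unimodular $s\times n$ block completes to an element of $GL_n(\mathbb{Z})$) can be brought to the form $[I_s\mid 0]$ by column operations; this pins down the free part and leaves a surjection from the remaining $n-s\geq r+1$ columns onto the torsion subgroup $T$. One then clears the torsion contributions of the first $s$ generators and, using one of the extra columns as a pivot, reduces the torsion block to the standard diagonal tuple $(\bar e_1,\dots,\bar e_r,0,\dots,0)$. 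The presence of a spare column is exactly what makes this reduction unobstructed, matching $\wedge^nG=0$. The free case $r=0$, $n=s$ is immediate: a generating $s$-tuple of $\mathbb{Z}^s$ is a basis, and $GL_s(\mathbb{Z})$ acts simply transitively on bases, so $\Gamma_s(\mathbb{Z}^s)$ is connected.

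The genuinely hard case is $n=r+s$ with $r\geq1$, where there is no spare generator and the invariant $\det(\psi)$ is the only candidate obstruction. Here I must show the invariant is \emph{complete}: two epimorphisms with the same $\det$ lie in one orbit. After the reduction above the problem becomes the finite case $s=0$, and the task is to show that $GL_r(\mathbb{Z})$ column operations carry any $r\times r$ matrix representing a surjection onto $\mathbb{Z}_{m_1}\times\cdots\times\mathbb{Z}_{m_r}$ to a standard diagonal representative determined only by its determinant modulo $m_1$ and sign. I would do this by a triangularization respecting the divisibility $m_1\mid\cdots\mid m_r$, repeatedly using that a single row of the torsion block can absorb unit changes while $\det(\psi)$ is preserved at each step; surjectivity of the invariant onto every value in $(\mathbb{Z}_{m_1})^\times/\{\pm1\}$ is seen by exhibiting explicit tuples. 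The main obstacle, and the place where $m_1=2$ versus $m_1\neq2$ becomes decisive, is precisely this completeness step: one must verify that no further $GL_r(\mathbb{Z})$ relation collapses distinct determinant classes, equivalently that the stabilizer of the standard kernel surjects onto $\{\pm1\}\subset(\mathbb{Z}_{m_1})^\times$ and onto nothing larger, so that exactly $\phi(m_1)/2$ classes survive when $m_1\neq2$ and a single class when $m_1=2$.
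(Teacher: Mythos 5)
This statement is one the paper does not prove: it is quoted as background from the cited references \cite{NN, DiacGrah, Oan}, so there is no in-paper argument to compare against. Measured against the standard proofs in those references, your outline is the right one. The reduction to orbits of $GL_n(\mathbb{Z})$ on $\operatorname{Epi}(\mathbb{Z}^n,G)$ is correct, the computation $\wedge^{r+s}G\cong\mathbb{Z}_{m_1}$ is correct (and as a bonus it shows $\operatorname{rank}(G)=r+s$, since $\wedge^{r+s}G\neq 0$ forces at least $r+s$ generators), and the induced determinant in $(\mathbb{Z}_{m_1})^{\times}/\{\pm1\}$ is exactly the complete invariant of Diaconis--Graham and Oancea; the stable case $n\geq r+s+1$ and the free case $r=0$ are handled correctly.

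The one substantive issue is that the completeness step for $n=r+s$, $r\geq 1$ --- which you correctly identify as the heart of the theorem --- is asserted rather than proved. Two remarks on it. First, your closing sentence conflates the two directions: that ``no further $GL_r(\mathbb{Z})$ relation collapses distinct determinant classes'' is already settled by the well-definedness of the invariant (precomposition multiplies it by $\det\alpha=\pm1$); what actually remains is the converse, namely that any surjection $\mathbb{Z}^r\to\mathbb{Z}_{m_1}\times\cdots\times\mathbb{Z}_{m_r}$ can be carried by column operations, together with the freedom to alter row-$i$ entries modulo $m_i$, to the normal form $\operatorname{diag}(u,1,\dots,1)$ with $u$ sitting in the $m_1$-row. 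Second, this reduction is not a formal consequence of Smith normal form, because row operations mixing the rows are not available (they would not preserve the subgroup $\Lambda=\langle m_1e_1,\dots,m_re_r\rangle$ unless they respect the divisibility); the mechanism that transports a unit discrepancy from a row with large $m_i$ up into the $m_1$-row genuinely uses the chain $m_1\mid\cdots\mid m_r$ and requires an explicit sequence of moves (already for $\mathbb{Z}_3\times\mathbb{Z}_9$ and the pair represented by $\operatorname{diag}(1,2)$ this is a small but non-obvious computation). Until that lemma is written out, the count $\phi(m_1)/2$ is only an upper bound on what the invariant can distinguish, not a proof that it equals the number of components.
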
 
Along with this result there are several partial results about the connectedness of $\Gamma_n(G)$  for particular families of groups. For instance, $\Gamma_{n+1}(G)$ is connected when $G$ is a finitely generated nilpotent group of $\operatorname{rank}$ $n$ \cite{Evans}. For further known results we refer the reader to Section \ref{N} on Nielsen equivalence, and to the references therein.

Let us get back to the Andrews-Curtis conjecture, which is still open. There have been several attempts to construct counter-examples. See Section \ref{AC} on Andrews-Curtis equivalence for more on that. A possible way to disprove the conjecture would be to find two normally generating systems of $F_n$ such that their images in some finitely generated group  are not Andrews-Curtis equivalent. This motivates to analyse the connected components of the Andrews-Curtis graph of  finitely generated groups. One of the few positive results in this direction is that $\Delta_n(G)$ is connected when $G$ is a free soluble group of rank $n$ \cite{Myas}, and its proof can be adapted for free nilpotent groups.

\medskip
Borovik, Lubotzky and Myasnikov \cite{BorLubMyas} studied connected components of $\Delta_n(G)$ for finite groups. In particular, they proved the following:
\begin{theorem}[\cite{BorLubMyas}]
Let $G$ be a finite group and $n\geq \operatorname{max}\{w(G),2\}$. Then two normally generating tuples $U,V$ are AC equivalent if and only if they are AC equivalent in the abelianization $\operatorname{Ab}(G)=G/[G,G]$. In other words, the connected components of the AC-graph $\Delta_n(G)$ are precisely the preimages of the connected components of the AC-graph $\Delta_n(\operatorname{Ab}(G))$. \label{blm}
\end{theorem}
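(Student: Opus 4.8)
The plan is to prove the two implications separately, the forward one being routine and the converse carrying all the weight. The \emph{only if} direction is a projection argument: the abelianization $\pi\colon G\to\operatorname{Ab}(G)$ induces a map on $n$-tuples sending normally generating tuples to generating tuples of $\operatorname{Ab}(G)$, and under $\pi$ each Nielsen move $R_{ij}^{\pm},L_{ij}^{\pm},I_j$ maps to the same Nielsen move, while each conjugation move $AC_{i,s}$ becomes a loop, conjugation being trivial in an abelian group. Thus $\pi$ carries edges of $\Delta_n(G)$ to edges or loops of $\Delta_n(\operatorname{Ab}(G))$, so AC equivalent tuples have Nielsen (hence AC) equivalent images.

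For the converse I would first reduce to a statement about a single fiber. If $\pi(U)$ and $\pi(V)$ are AC equivalent in $\operatorname{Ab}(G)$, fix a chain of Nielsen moves realizing this and apply the identical formal moves to $U$; since Nielsen moves are defined the same way in every group, this yields a tuple $U'$ that is AC equivalent to $U$ in $G$ and satisfies $\pi(U')=\pi(V)$. Hence it suffices to prove that two normally generating $n$-tuples $U'=(u_1,\dots,u_n)$ and $V=(v_1,\dots,v_n)$ with $v_i=u_ic_i$, $c_i\in[G,G]$, are AC equivalent. The basic tool is the following maneuver: for $g\in G$ and $j\neq i$, performing $AC_{i,g}$, then $R_{ij}^{\pm}$, then $AC_{i,g^{-1}}$ replaces $u_i$ by $u_i\cdot gu_j^{\pm1}g^{-1}$ and fixes the other entries. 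Iterating shows that $(u_1,\dots,u_n)$ is AC equivalent to $(u_1,\dots,u_ir,\dots,u_n)$ for every $r$ in the normal closure $K_i=\ll u_j\mid j\neq i\gg$.

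I would then argue by induction on $|G|$. If $[G,G]=1$ then $G$ is abelian and there is nothing to prove. Otherwise choose a minimal normal subgroup $N\trianglelefteq G$ with $N\leq[G,G]$. Since $N\leq[G,G]$ one has $\operatorname{Ab}(G/N)=\operatorname{Ab}(G)$, so the images of $U',V$ in $\operatorname{Ab}(G/N)$ still agree; as $w(G/N)\leq w(G)\leq n$, the inductive hypothesis applied to $G/N$ shows that the images of $U',V$ in $G/N$ are AC equivalent. Lifting this chain of moves to $G$ (Nielsen moves lift formally, and $AC_{i,\bar s}$ lifts to $AC_{i,s}$ for any preimage $s$) lets me replace $U'$ by an AC equivalent tuple and assume in addition that $c_i\in N$ for all $i$.

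It remains to correct each coordinate by a prescribed element of the minimal normal subgroup $N$, and this is the main obstacle. The maneuver above handles coordinate $i$ whenever $c_i\in K_i$; by minimality of $N$ either $N\leq K_i$, in which case that coordinate is free, or $N\cap K_i=1$. The hard case is the latter: $N$ injects into the quotient $G/K_i$, which is normally generated by the single element $\bar u_i$, and no move keeping the other coordinates fixed can produce a nontrivial element of $N$ in the $i$-th slot. Here one must use that $n\geq\max\{w(G),2\}\geq2$ and move several coordinates simultaneously. I would package the corrections as the set $E\subseteq N^n$ of realizable tuples $(c_1,\dots,c_n)$; the maneuvers show that $E$ is stable under the diagonal $G$-action coming from the conjugation moves and under the coordinate transvections coming from $R_{ij}$, so in the representative case where $N$ is an irreducible $\mathbb{Z}_p[G]$-module, $E$ is a $G$-submodule of $N^n$. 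Showing that $E$ is nonzero and projects onto each factor then forces $E=N^n$ by irreducibility, and the non-abelian case is handled by the same argument using that $N$ is characteristically simple. Once every coordinate can be corrected, applying the corrections in turn transforms $U'$ into $V$ and closes the induction.
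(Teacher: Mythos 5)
You should first be aware that the paper does not prove this statement: Theorem \ref{blm} is imported verbatim from \cite{BorLubMyas} as background, and the only related argument the paper contains is the proof of Theorem \ref{GACC}, which is a genuinely different and much more elementary route that works only for groups in $\mathfrak{C}$ (there normal generation coincides with generation and $[G,G]\leq\Phi(G)$, so one can write down explicit Reidemeister--Schreier generators of $[G,G]$ and kill them one at a time by explicit move sequences). So your proposal has to be judged against the actual Borovik--Lubotzky--Myasnikov proof, whose top-level architecture you have in fact reproduced correctly: the forward direction by projection, the reduction to a single fiber by applying the same formal Nielsen moves, the conjugation--transvection maneuver showing $u_i$ can be multiplied by anything in $K_i=\ll u_j\mid j\neq i\gg$, and the induction on $|G|$ through a minimal normal subgroup $N\leq[G,G]$ are all sound and all present in the original.

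The gap is in the only step that carries real content, namely showing that every correction tuple in $N^n$ is realizable. First, your set $E\subseteq N^n$ is not shown to be a subgroup, let alone a $G$-submodule: AC moves are not equivariant with respect to componentwise multiplication of the fiber (the moves $R_{ij}$ use the current entries, which differ between the tuples being compared), and the diagonal conjugation $AC_{1,g},\dots,AC_{n,g}$ moves the tuple to the fiber over $(gu_1g^{-1},\dots,gu_ng^{-1})$ rather than stabilizing the fiber over $(u_1,\dots,u_n)$; so the module structure you invoke has to be earned. Second, even granting that $E$ is a submodule of $N^n$ with $N$ irreducible, the conclusion ``$E$ nonzero and projecting onto each factor forces $E=N^n$'' is false: the diagonal $\{(c,c,\dots,c)\}$ is a nonzero submodule of $N^n$ surjecting onto every factor. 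Ruling this out is exactly where the hypothesis $n\geq 2$ and the normal generation of $G$ by the tuple must enter, via a concrete computation producing corrections supported in a single coordinate, and you have not produced one in the case $N\cap K_i=1$ that you yourself identify as the hard case. Finally, the non-abelian minimal normal case is not ``the same argument using that $N$ is characteristically simple'': in \cite{BorLubMyas} it requires a separate and substantial treatment resting on generation properties of products of non-abelian simple groups. As it stands the proposal is a correct skeleton with the decisive lemma asserted rather than proved.
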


Furthermore, they raised the question whether such a criterion holds for the Grigorchuk group \cite{ Grig80,GrigSelecta}: this is a 3-generated residually finite 2-group which is just-infinite; hence Theorem \ref{blm} holds in every proper quotient of the group. It equally makes sense to ask the question for other just-infinite groups. We will now discuss a class of groups which includes the Grigorchuk group (see e.g. \cite{dlHar}, Chap. VIII for more on the Grigorchuk group).

\medskip
In this paper we focus on the class \emph{$\mathfrak{C}$ of finitely generated groups for which every maximal subgroup is normal}. 

A finite group is in $\mathfrak{C}$ if and only if it is nilpotent \cite[5.2.4]{Rob}. Moreover $\mathfrak{C}$ contains finitely generated nilpotent groups, because any maximal subgroup of a nilpotent group is normal \cite[I.70]{Bour}. The class $\mathfrak{C}$ also contains the family of Grigorchuk groups $(G_{\omega})_{w\in \Omega}$ \cite{Grig80, Gr84} indexed by sequence in $\Omega=\{0,1,2\}^{\mathbb{N}}$, and Gupta-Sidki $p$-groups \cite{Pervova, Per5}.

We will now state our first result: 

\begin{theorem}\label{GACC}
Let $G$ be in $\mathfrak{C}$ and $n\geq w(G)$. Then two normally generating $n$-tuples $U,V$ are AC equivalent if and only if they are AC equivalent in the abelianization $\operatorname{Ab}(G)=G/[G,G]$. In other words, the connected components of the AC-graph $\Delta_n(G)$ are precisely the preimages of the connected components of the AC-graph $\Delta_n(\operatorname{Ab}(G))$.
\end{theorem}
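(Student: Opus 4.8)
The plan is to prove Theorem \ref{GACC} by reducing AC equivalence in $G$ to AC equivalence in the abelianization, mirroring the strategy of Theorem \ref{blm} for finite groups but exploiting the defining property of the class $\mathfrak{C}$ in place of finiteness. Since every AC-move projects to an AC-move in $\operatorname{Ab}(G)$, one direction is immediate: if $U$ and $V$ are AC equivalent in $G$, their images are AC equivalent in $\operatorname{Ab}(G)$. The entire content lies in the converse, so I would fix two normally generating $n$-tuples $U=(u_1,\dots,u_n)$ and $V=(v_1,\dots,v_n)$ whose images in $\operatorname{Ab}(G)$ are AC equivalent, and aim to show $U$ and $V$ are AC equivalent in $G$.

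The key structural input I would extract from the hypothesis $G\in\mathfrak{C}$ is a Frattini-type statement: because every maximal subgroup of $G$ is normal, a subset normally generates $G$ if and only if it generates $G$ modulo the Frattini-like obstruction, and in fact an element of the Frattini subgroup is a ``non-generator.'' Concretely, I would first establish the following reduction lemma. Suppose $(g_1,\dots,g_n)$ normally generates $G$; then for any $g\in G$, the tuple obtained by replacing $g_i$ with $g_i c$, where $c$ lies in the normal closure generated by the remaining entries together with $[G,G]$, is AC equivalent to the original. The point is that elements of $[G,G]$ (and their conjugates) can be absorbed by AC-moves because conjugation $x_i\mapsto s x_i s^{-1}=x_i\cdot(x_i^{-1}sx_is^{-1})$ realizes commutator adjustments, and in the class $\mathfrak{C}$ one can multiply an entry by an arbitrary conjugate of another entry without leaving the AC orbit. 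I would carry this out by showing that for $G\in\mathfrak{C}$ with $n\geq w(G)$, any normally generating $n$-tuple is AC equivalent to one that lifts a prescribed generating tuple of $\operatorname{Ab}(G)$ via a fixed set-theoretic section.

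The main argument then proceeds as follows. First I would choose a section $\sigma:\operatorname{Ab}(G)\to G$ and show that each normally generating tuple $U$ is AC equivalent to a ``normalized'' tuple $\widetilde U$ all of whose entries lie in $\sigma(\operatorname{Ab}(G))$ and which reduces to the image of $U$ in $\operatorname{Ab}(G)$; this uses the Frattini property to guarantee that the image of a normally generating tuple is a generating tuple of $\operatorname{Ab}(G)$, hence has exactly $n\geq w(G)\geq \operatorname{rank}(\operatorname{Ab}(G))$ entries and normally generates. Next, given that the images of $U$ and $V$ are AC equivalent in $\operatorname{Ab}(G)$, I would lift the chain of AC-moves realizing this equivalence to a chain of AC-moves in $G$ acting on the normalized tuples, checking that each abelianized move lifts to a genuine AC$_G$-move up to an error lying in $[G,G]$. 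Finally, the reduction lemma is invoked to clear these commutator errors, concluding that $\widetilde U$ and $\widetilde V$, and hence $U$ and $V$, are AC equivalent in $G$.

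The hard part will be the lifting and error-clearing step: ensuring that the commutator discrepancies produced when lifting abelianized AC-moves can always be removed by AC$_G$-moves without disturbing the already-matched abelian data. This is exactly where finiteness was used in Theorem \ref{blm}, and for infinite $G\in\mathfrak{C}$ I expect to need a careful inductive argument on the lower central or derived series, or alternatively a direct argument that the normal closure of any single entry, together with the commutator subgroup, is AC-reachable. The delicate point is that $[G,G]$ need not be finitely generated as a normal subgroup in a controlled way for infinite groups, so I would isolate a lemma asserting that in any group of class $\mathfrak{C}$ with $n\geq w(G)$, multiplication of an entry by an arbitrary element of $[G,G]$ is realizable by a finite sequence of AC$_G$-moves; proving this absorption property is the crux on which the whole theorem rests.
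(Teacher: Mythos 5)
Your high-level reduction is exactly the paper's: the forward direction is immediate, and by Proposition \ref{obs} (normally generating $=$ generating in $\mathfrak{C}$, and $[G,G]\leq\Phi(G)$) the converse reduces to showing that a generating tuple $(x_1,\dots,x_n)$ is AC equivalent to $(x_1\varphi_1,\dots,x_n\varphi_n)$ for arbitrary $\varphi_i\in[G,G]$. (Lifting the abelian AC-chain is unproblematic, since in $\operatorname{Ab}(G)$ the conjugation moves are trivial and the Nielsen moves lift verbatim, so no elaborate ``section'' apparatus is needed.) But the proposal stops precisely at the point where the theorem actually has content: the absorption lemma --- that for $\langle x_1,\dots,x_n\rangle=G$ and any $c\in[G,G]$ one has $(x_1c,x_2,\dots,x_n)\sim_{AC}(x_1,\dots,x_n)$ --- is stated, flagged by you as ``the crux on which the whole theorem rests,'' and not proved. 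The only mechanism you offer for it, namely that conjugating an entry by $s$ multiplies it by $x_i^{-1}sx_is^{-1}$, produces only commutators of the very special form $[x_i^{-1},s]$ attached to an existing entry; an arbitrary element of $[G,G]$ is a product of commutators $[g,h]$ with $g,h$ arbitrary words in the generators, and it is not at all clear from your sketch how to reach those. Your worry about $[G,G]$ not being finitely generated as a normal subgroup is also a red herring: any single $c$ is a finite product of commutators, so no finiteness of $[G,G]$ is ever needed.

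The paper closes this gap by a concrete device you do not have: it uses the Reidemeister--Schreier rewriting process with respect to the coset representatives $x^{n_1}y^{n_2}$ to produce an explicit generating set $S$ of $[G,G]$ consisting of Schreier generators $x^{n_1}y^{n_2}x(x^{n_1+1}y^{n_2})^{-1}$, and then inducts on the $S$-length of $\varphi_1$, exhibiting for each Schreier generator an explicit finite chain of $L_{ij}$, $I_j$ and conjugation moves that strips it off; the second coordinate is handled the same way after replacing $x$ by $\hat x=x\varphi_1$, and the cases $n=\operatorname{rank}(G)\geq 3$ and $n>\operatorname{rank}(G)$ follow by the same computations plus Theorem \ref{thm:diakgrah}. (The paper also notes this absorption fact is Property 2 of Myasnikov's paper \cite{Myas}.) Note that the crucial hypothesis making this work is that the tuple \emph{generates} $G$ --- which is exactly what membership in $\mathfrak{C}$ buys --- since the Schreier generators are words in the entries themselves. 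Without supplying a proof of the absorption lemma along these (or other) lines, your argument establishes only the easy direction.
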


Together with Theorem \ref{thm:diakgrah} our result describes the connected components of the Andrews-Curtis graph for  groups in $\mathfrak{C}$. 
\begin{corollary}
For the Grigorchuk group $G$ the Andrews-Curtis graph $\Delta_n(G), n\geq 3$, is connected.
\label{ACGrig}
\end{corollary}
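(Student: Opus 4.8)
The plan is to derive Corollary~\ref{ACGrig} as a direct consequence of Theorem~\ref{GACC} applied to $G$ the Grigorchuk group, combined with the description of the abelianization supplied by Theorem~\ref{thm:diakgrah}. The first step is to verify that $G$ belongs to the class $\mathfrak{C}$: this is already asserted in the excerpt, since the Grigorchuk groups $(G_\omega)_{\omega\in\Omega}$ are listed among the members of $\mathfrak{C}$, and it is precisely the feature of $G$ that every maximal subgroup is normal. The second step is to record the relevant invariants of $G$: it is a $3$-generated group, so $\operatorname{rank}(G)=3$, and moreover it is known that $w(G)=3$ as well (a just-infinite $2$-group of rank $3$ cannot be normally generated by fewer elements, since its abelianization already requires three generators). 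Hence the hypothesis $n\geq w(G)$ in Theorem~\ref{GACC} is satisfied exactly when $n\geq 3$, matching the range in the statement.

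Next I would compute the abelianization. For the first Grigorchuk group one has $\operatorname{Ab}(G)\cong (\mathbb{Z}/2\mathbb{Z})^3$, reflecting that the three standard generators $a,b,c,d$ (with $bcd=1$) each have order $2$ and commute modulo $[G,G]$. By Theorem~\ref{GACC}, the connected components of $\Delta_n(G)$ are the preimages of the connected components of $\Delta_n(\operatorname{Ab}(G))$; since for an abelian group $\Delta_n$ and $\Gamma_n$ coincide, it suffices to show that $\Gamma_n\bigl((\mathbb{Z}/2\mathbb{Z})^3\bigr)$ is connected for every $n\geq 3$. Here I apply Theorem~\ref{thm:diakgrah} with $G\cong(\mathbb{Z}/2\mathbb{Z})^3$, i.e. $r=3$, $s=0$, and $m_1=m_2=m_3=2$, so $\operatorname{rank}=3$. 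The theorem gives connectedness of $\Gamma_n$ for all $n\geq r+s+1=4$, and in the borderline case $n=r+s=3$ the hypothesis $m_1=2$ ensures that $\Gamma_3$ is connected as well. Thus $\Delta_n(\operatorname{Ab}(G))$ is connected for all $n\geq 3$.

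Combining these, the preimage of the single connected component of $\Delta_n(\operatorname{Ab}(G))$ is all of $\Delta_n(G)$, so $\Delta_n(G)$ is connected for $n\geq 3$, which is the assertion of the corollary. The genuinely new input is Theorem~\ref{GACC}; once that is granted, the remaining work is the purely bookkeeping verification that $G\in\mathfrak{C}$, that $w(G)=3$, and that the abelianization is $(\mathbb{Z}/2\mathbb{Z})^3$ with connected Nielsen graph in the required range. The only point that deserves care, and which I expect to be the main (albeit minor) obstacle, is confirming the boundary value $n=3$: one must check both that $w(G)=3$ so that $n=3$ lies in the admissible range of Theorem~\ref{GACC}, and that the $m_1=2$ clause of Theorem~\ref{thm:diakgrah} indeed yields connectedness of $\Gamma_3\bigl((\mathbb{Z}/2\mathbb{Z})^3\bigr)$ rather than several components. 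For $n\geq 4$ no such subtlety arises.
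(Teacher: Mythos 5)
Your proposal is correct and follows essentially the same route as the paper: both identify $\operatorname{Ab}(G)\cong(\mathbb{Z}/2\mathbb{Z})^3$, use Theorem~\ref{thm:diakgrah} (including the $m_1=2$ clause for the borderline case $n=3$) to get connectedness of $\Delta_n(\operatorname{Ab}(G))$ for $n\geq 3$, and then pull back via Theorem~\ref{GACC}. The only cosmetic difference is that the paper also invokes Proposition~\ref{conn} for $n\geq 4$, which is not strictly needed given the first bullet of Theorem~\ref{thm:diakgrah}, and your verification that $w(G)=3$ (equivalently via Proposition~\ref{obs}, $w(G)=\operatorname{rank}(G)$ for groups in $\mathfrak{C}$) is a correct handling of the hypothesis of Theorem~\ref{GACC}.
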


It is not hard to see that, for groups in $\mathfrak{C}$, the set of vertices of $\Delta_n(G)$ and $\Gamma_n(G)$ coincide (see Proposition \ref{obs}). Therefore connectedness of $\Gamma_n(G)$ implies connectedness of $\Delta_n(G)$. Our other results are devoted to connectedness of $\Gamma_n(G)$. We have already mentioned that $\Gamma_n(G)$ is connected for nilpotent groups when $n\geq \operatorname{rank}(G)+1$. Here we study connectedness of $\Gamma_n(G)$ for nilpotent groups when $n=\operatorname{rank}(G)$. For free nilpotent groups, as a direct corollary of the result by Andreadakis \cite{Andreadakis} and Bachmuth \cite{Bach}, we obtain the following: 
\begin{theorem}[Section $4.2$]
Let $G$ be the free nilpotent group of rank $n\geq 2$ and nilpotency class $c$. Then
$\Gamma_n(G)$ is connected if and only if $c=1$ or $2$. 
\label{thm1.4}
\end{theorem}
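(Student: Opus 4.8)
The plan is to reduce the connectedness of $\Gamma_n(G)$ to a single surjectivity statement about automorphism groups, after which the cited result of Andreadakis and Bachmuth applies directly. Note first that the free nilpotent group $G$ of rank $n$ has $\operatorname{rank}(G)=n$ (its abelianization is $\mathbb{Z}^n$), so $\Gamma_n(G)$ is exactly the Nielsen graph at the critical rank. Write $N=\gamma_{c+1}(F_n)$ for the $(c+1)$-st term of the lower central series, so that $G=F_n/N$, and let $\pi\colon F_n\to G$ be the canonical projection. Since $N$ is characteristic in $F_n$, every $\psi\in\operatorname{Aut}F_n$ descends to an automorphism $\rho(\psi)\in\operatorname{Aut}G$ with $\pi\circ\psi=\rho(\psi)\circ\pi$; this defines a homomorphism $\rho\colon\operatorname{Aut}F_n\to\operatorname{Aut}G$. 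The goal is to show that $\Gamma_n(G)$ is connected if and only if $\rho$ is surjective.

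The first step is to check that $\operatorname{Epi}(F_n,G)$ is a torsor under $\operatorname{Aut}G$. If $\phi\colon F_n\to G$ is any epimorphism then $\ker\phi\supseteq N$, because $G$ is nilpotent of class $c$; hence $\phi$ factors as $\phi=\bar\phi\circ\pi$ for an epimorphism $\bar\phi\colon G\to G$. As a finitely generated nilpotent group, $G$ is Hopfian, so $\bar\phi\in\operatorname{Aut}G$, and $\bar\phi$ is uniquely determined by $\phi$ since $\pi$ is surjective. Thus $\phi\mapsto\bar\phi$ is a bijection $\operatorname{Epi}(F_n,G)\xrightarrow{\ \sim\ }\operatorname{Aut}G$. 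I would then translate the $\operatorname{Aut}F_n$-action under this bijection: for $\psi\in\operatorname{Aut}F_n$ and $\phi=\bar\phi\circ\pi$,
\begin{equation*}
\phi\circ\psi=\bar\phi\circ\pi\circ\psi=\bar\phi\circ\rho(\psi)\circ\pi,
\end{equation*}
so precomposition by $\psi$ corresponds to multiplying $\bar\phi$ by $\rho(\psi)$ in $\operatorname{Aut}G$. Consequently the $\operatorname{Aut}F_n$-orbits in $\operatorname{Aut}G$ are precisely the cosets of the subgroup $\rho(\operatorname{Aut}F_n)$, so there is a single orbit if and only if $\rho$ is surjective. Since, as observed in the introduction, $\Gamma_n(G)$ is connected exactly when $\operatorname{Aut}F_n$ acts transitively on $\operatorname{Epi}(F_n,G)$, we obtain
\begin{equation*}
\Gamma_n(G)\ \text{is connected}\iff\ \rho\colon\operatorname{Aut}F_n\to\operatorname{Aut}G\ \text{is surjective.}
\end{equation*}

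It then remains to determine for which $c$ the map $\rho$ is surjective, i.e. when every automorphism of $G$ is \emph{tame} (induced by an automorphism of $F_n$). For $c=1$ we have $G=\mathbb{Z}^n$ and $\rho$ is the classical surjection $\operatorname{Aut}F_n\twoheadrightarrow GL_n(\mathbb{Z})$. For $c\geq 2$ this is exactly the content of the results of Andreadakis \cite{Andreadakis} and Bachmuth \cite{Bach}: $\rho$ is surjective when $c=2$, and fails to be surjective as soon as $c\geq 3$, since in that case there exist non-tame automorphisms of $G$. Hence $\rho$ is surjective precisely for $c\in\{1,2\}$, which by the displayed equivalence is exactly the condition for $\Gamma_n(G)$ to be connected.

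The genuine mathematical weight sits in the cited Andreadakis–Bachmuth input, and in particular the construction of non-tame automorphisms for $c\geq 3$, which I would simply quote rather than reprove. The work to be carried out on our side is the reduction above: the point requiring care is the Hopficity step (verifying that every epimorphism $F_n\to G$ has kernel exactly $N$, so that $\operatorname{Epi}(F_n,G)$ really is an $\operatorname{Aut}G$-torsor), together with the bookkeeping that identifies the $\operatorname{Aut}F_n$-action — equivalently, the action generated by the elementary Nielsen moves — with translation by $\rho(\operatorname{Aut}F_n)$ on this torsor.
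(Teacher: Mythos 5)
Your proposal is correct and follows essentially the same route as the paper: the paper reduces connectedness of $\Gamma_n(G)$ to the surjectivity of $\rho\colon\operatorname{Aut}F_n\to\operatorname{Aut}(F_n/V)$ via the observation that $\gamma_{c+1}F_n$ is verbal (citing Magnus--Karrass--Solitar for the fact that any generating $n$-tuple of a relatively free group of rank $n$ extends to an automorphism), and then invokes Andreadakis and Bachmuth, exactly as you do. The only cosmetic difference is that you justify the identification $\operatorname{Epi}(F_n,G)\cong\operatorname{Aut}G$ by Hopficity of finitely generated nilpotent groups rather than by the relatively-free-group property, which is a perfectly valid variant of the same step.
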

Furthermore, let $\mathcal{H}_k$ be the discrete Heisenberg group of rank $2k$ (Section \ref{NilHeis}).
\begin{theorem}[Section $4.2$]
The graph $\Gamma_n(\mathcal{H}_k)$ is connected when $n\geq 2k$.
\label{thm1.5}
\end{theorem}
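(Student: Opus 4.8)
The plan is to realize $\mathcal{H}_k$ as a central quotient of a free $2$-step nilpotent group and to pull back the connectedness already supplied by Theorem \ref{thm1.4}. Let $N$ be the free nilpotent group of class $2$ and rank $2k$, with free generators $x_1,\dots,x_{2k}$; its commutator subgroup coincides with its centre and is free abelian on the basic commutators $[x_i,x_j]$ with $i<j$. Sending $x_1,\dots,x_k$ to $a_1,\dots,a_k$ and $x_{k+1},\dots,x_{2k}$ to $b_1,\dots,b_k$ exhibits $\mathcal{H}_k$ as $N/K$, where $K$ is the evident central subgroup: it kills every basic commutator except the ``diagonal'' ones $[x_i,x_{k+i}]$ and identifies those with a single central element $c$. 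Since $N$ is nilpotent we have $K\le Z(N)=[N,N]\le\Phi(N)$, the Frattini subgroup. I would also record that $\operatorname{rank}(\mathcal{H}_k)=2k$, because $\operatorname{Ab}(\mathcal{H}_k)\cong\mathbb{Z}^{2k}$.

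Next I would prove that connectedness of $\Gamma_{2k}$ descends along $\pi\colon N\to N/K=\mathcal{H}_k$. The key point is the Frattini containment $K\le\Phi(N)$: a $2k$-tuple in $N$ generates $N$ as soon as its image generates $N/K$, so every generating $2k$-tuple of $\mathcal{H}_k$ admits a (coordinatewise, arbitrary) lift to a generating $2k$-tuple of $N$. Applying $\pi$ to each coordinate therefore yields a graph homomorphism $\pi_*\colon\Gamma_{2k}(N)\to\Gamma_{2k}(\mathcal{H}_k)$ that is surjective on vertices; it sends edges to edges because elementary Nielsen moves are word operations and $\pi$ is a homomorphism. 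A surjective graph homomorphism maps a connected graph onto a connected graph, so the connectedness of $\Gamma_{2k}(N)$—which is exactly the class $c=2$ instance of Theorem \ref{thm1.4}—forces $\Gamma_{2k}(\mathcal{H}_k)$ to be connected.

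For the remaining levels $n>2k$ I would argue directly: as $\mathcal{H}_k$ is nilpotent of rank $2k$, the result of Evans \cite{Evans} gives connectedness of $\Gamma_{2k+1}(\mathcal{H}_k)$, and the higher levels follow from the stabilization property that $\Gamma_n(G)$ connected implies $\Gamma_{n+1}(G)$ connected once $n\ge\operatorname{rank}(G)$ (every generating $(n+1)$-tuple with $n\ge\operatorname{rank}(G)$ is Nielsen-equivalent to one with trivial last entry, after which moves among the first $n$ entries reduce matters to $\Gamma_n$). The genuine content is thus concentrated in the critical rank case $n=2k$, whose difficulty has been entirely absorbed into Theorem \ref{thm1.4}; the only facts needing real care in the reduction are the identification of $K$ as a central—hence Frattini—subgroup and the surjectivity of $\pi_*$ on vertices. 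I expect the Frattini containment $K\le\Phi(N)$ to be the linchpin: it is precisely what makes generating tuples lift, and without it the descent would fail.
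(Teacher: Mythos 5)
Your argument is correct, but it is genuinely different from the one in the paper. You present $\mathcal{H}_k$ as $F_{2k}(2)/K$ with $K\le [F_{2k}(2),F_{2k}(2)]\le\Phi(F_{2k}(2))$, lift generating $2k$-tuples through the Frattini quotient (for a finitely generated group $N$, $H K=N$ with $K\le\Phi(N)$ forces $H=N$, since otherwise $H$ lies in a maximal subgroup which also contains $\Phi(N)$), and push the connectedness of $\Gamma_{2k}(F_{2k}(2))$ --- the $c=2$ case of Theorem \ref{thm1.4}, i.e.\ the Andreadakis--Bachmuth tameness theorem --- down along the resulting vertex-surjective graph homomorphism. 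The paper instead works entirely inside $\mathcal{H}_k$ in matrix coordinates: it shows that generation of $\mathcal{H}_k$ by a $2k$-tuple is detected in $\operatorname{Ab}(\mathcal{H}_k)\cong\mathbb{Z}^{2k}$, that $\Phi(\mathcal{H}_k)=[\mathcal{H}_k,\mathcal{H}_k]$, reduces any generating $2k$-tuple to the normal form $((1,0,\dots,0,m_1),\dots,(0,\dots,1,m_{2k}))$ via the abelian case (Theorem \ref{thm:diakgrah}), and then kills the $m_i$ by the explicit moves $(L^{-1}_{i,k+i}R_{i,k+i})^{m_i}$. Your route is shorter and more conceptual, and in fact proves more: $\Gamma_n(G)$ is connected for every rank-$n$ quotient of $F_n(2)$ by a subgroup of the commutator subgroup. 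The paper's computation buys explicit Nielsen paths and the intermediate facts about $\Phi(\mathcal{H}_k)$, and does not need the (nontrivial) tameness input beyond the abelian case. Two small points to tighten: you should verify that the surjection $F_{2k}(2)\to\mathcal{H}_k$ has kernel exactly your $K$ (routine via the five lemma for the two central extensions of $\mathbb{Z}^{2k}$ by $\mathbb{Z}$); and your parenthetical justification of the stabilization step for $n>2k$ is not a proof as stated --- that every generating $(n+1)$-tuple is Nielsen equivalent to one with trivial last entry is precisely the hard point in general --- but this is harmless here, since Evans's theorem as cited already gives connectedness of $\Gamma_n(\mathcal{H}_k)$ for all $n\ge 2k+1$ at once.
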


In the end we prove the connectedness of $\Gamma_n(G)$ for $p$-groups in $\mathfrak{C}$ with $n\geq \operatorname{rank}(G)+1$:

\begin{theorem}[Section $4.1$]
Let $G$ be a $p$-group in $\mathfrak{C}$. Then $\Gamma_n(G)$ is connected when $n\geq \operatorname{rank} (G)+1$.
\label{thm1.3}
\end{theorem}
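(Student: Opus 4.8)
The plan is to reduce the whole problem to the Frattini quotient $\bar G := G/\Phi(G)$ and to the abelian case already settled in Theorem~\ref{thm:diakgrah}. The two structural inputs I would isolate first are: (i) since $G$ is a $p$-group in $\mathfrak{C}$, every maximal subgroup is normal of index $p$ (its quotient being a simple $p$-group), so $\bar G$ is elementary abelian, $\bar G\cong(\mathbb{Z}/p)^d$, and by the Frattini (Burnside-basis) property $d=\operatorname{rank}(G)$; and (ii) a tuple generates $G$ if and only if its image generates $\bar G$, the standard non-generator property of $\Phi(G)$ for finitely generated groups. Because elementary Nielsen moves are natural with respect to the quotient $\pi\colon G\to\bar G$, the assignment $U\mapsto\pi(U)$ is a morphism of graphs $\Gamma_n(G)\to\Gamma_n(\bar G)$, and any Nielsen move performed downstairs lifts to the same move upstairs while preserving generation.

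Given two generating $n$-tuples $U,V$ with $n\ge d+1$, I would first connect their images. By Theorem~\ref{thm:diakgrah} applied to $(\mathbb{Z}/p)^d$, the graph $\Gamma_n(\bar G)$ is connected for $n\ge d+1$, so there is a path from $\pi(U)$ to $\pi(V)$; lifting it starting from $U$ yields $U\sim U'$ with $\pi(U')=\pi(V)$. This reduces the theorem to the key claim: if $\pi(U')=\pi(V)$ then $U'\sim V$.

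For the key claim, the idea is to exploit the spare coordinate guaranteed by $n\ge d+1$ as a scratch register. Applying to the common image the same sequence of abelian Nielsen moves that carries it to the standard tuple $(e_1,\dots,e_d,0,\dots,0)$, performed simultaneously on $U'$ and on $V$, I may assume $\pi(U')=\pi(V)=(e_1,\dots,e_d,0,\dots,0)$. Now the first $d$ coordinates already generate $G$, so the last coordinate, which lies in $\Phi(G)$, can be reduced to $1$ by $R_{n,j}$-moves in both tuples. With slot $n$ free, I would correct the coordinates one at a time: to replace $a_i$ by $b_i=a_i\psi_i$, where $\psi_i=a_i^{-1}b_i\in\Phi(G)$ because $\pi(a_i)=\pi(b_i)$, I set slot $n$ equal to $\psi_i$ (a word in the current generators, since $\psi_i\in G$), apply $R_{in}$ to turn $a_i$ into $a_i\psi_i=b_i$, and then return slot $n$ to $1$. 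Throughout, every intermediate tuple still generates $G$, because multiplying a coordinate by an element of $\Phi(G)$ leaves its image in $\bar G$ unchanged; iterating over $i=1,\dots,n-1$ transforms $U'$ into $V$.

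The main obstacle, and the real content, is this last scratch-register step: showing that an individual Frattini modification $a_i\mapsto a_i\psi_i$ is realizable by Nielsen moves while keeping all intermediate tuples inside $\Gamma_n(G)$. This is exactly where the hypotheses enter: the vector-space structure of $\bar G$, coming from $G\in\mathfrak{C}$ being a $p$-group, makes ``generates'' equivalent to ``spans'' and lets me invoke Theorem~\ref{thm:diakgrah}, while the extra generator $n\ge\operatorname{rank}(G)+1$ is what produces the free slot needed to absorb the non-generators $\psi_i$. I would expect the write-up to spend most of its effort verifying that generation is preserved at each elementary move and that $\psi_i$ is indeed expressible in the generators available at that stage; the breakdown of this mechanism when $n=\operatorname{rank}(G)$, for lack of a spare coordinate, is consistent with the failure recorded in Theorem~\ref{thm1.4}.
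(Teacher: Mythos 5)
Your argument is correct and is essentially the paper's own route: the reduction to $G/\Phi(G)\cong(\mathbb{Z}/p\mathbb{Z})^{\operatorname{rank}(G)}$ is Proposition~\ref{prop:frat}, the abelian input is Theorem~\ref{thm:diakgrah}, and your ``scratch-register'' lifting of Frattini corrections is precisely the content of Evans's Proposition~\ref{evans} (hence Corollary~\ref{con}), which the paper simply cites. The only cosmetic difference is that you normalize both tuples to a common image and then match coordinates, whereas the paper normalizes one tuple directly to the standard form; the mechanism is the same.
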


We remind that the Gupta-Sidki $p$-group, defined for every odd prime $p$, is $2$-generated.
\begin{corollary}
 $\Gamma_n(G)$ is connected for the Grigorchuk group and the Gupta-Sidki group, when $n\geq 4$ and $n\geq 3$ respectively.\footnote{Connectedness of $\Gamma_n(G)$, $n\geq 4$, for the Grigorchuk group was also obtained in \cite{MalPak}.}
\end{corollary}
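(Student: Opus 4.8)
The plan is to derive both statements as direct instances of Theorem \ref{thm1.3}. The entire task therefore reduces to checking, for each of the two groups, the three hypotheses of that theorem: membership in $\mathfrak{C}$, being a $p$-group, and the value of $\operatorname{rank}(G)$.

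The first two hypotheses are already recorded earlier in the paper. The class $\mathfrak{C}$ contains the Grigorchuk group and the Gupta-Sidki $p$-groups; the Grigorchuk group is a residually finite $2$-group, and the Gupta-Sidki group is a $p$-group for each odd prime $p$. So in both cases we are dealing with a $p$-group lying in $\mathfrak{C}$, and Theorem \ref{thm1.3} applies as soon as we know the rank.

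It remains to identify the rank. The Grigorchuk group is $3$-generated and its abelianization is $(\mathbb{Z}/2)^3$, so it cannot be generated by fewer than $3$ elements and $\operatorname{rank}(G)=3$; hence $\operatorname{rank}(G)+1=4$. The Gupta-Sidki group is $2$-generated with abelianization $(\mathbb{Z}/p)^2$, so it is not cyclic and $\operatorname{rank}(G)=2$; hence $\operatorname{rank}(G)+1=3$. (Since both groups are $p$-groups, this is simply the statement that the minimal number of generators is detected on the Frattini quotient $G/\Phi(G)$, which in each case is the elementary abelian group computed above.)

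Feeding these values into Theorem \ref{thm1.3} yields connectedness of $\Gamma_n(G)$ for $n\geq 4$ in the Grigorchuk case and for $n\geq 3$ in the Gupta-Sidki case, which is exactly the assertion. There is no genuine obstacle here: beyond the citation of Theorem \ref{thm1.3}, the only input is the rank of each group, and both rank computations are standard and follow from the descriptions of the abelianizations recalled above.
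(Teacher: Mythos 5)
Your proposal is correct and matches the paper's (implicit) argument exactly: the corollary is stated as an immediate consequence of Theorem \ref{thm1.3}, using that the Grigorchuk group is a $3$-generated $2$-group in $\mathfrak{C}$ and the Gupta-Sidki group is a $2$-generated $p$-group in $\mathfrak{C}$, so that $\operatorname{rank}(G)+1$ equals $4$ and $3$ respectively. Your verification of the ranks via the abelianizations is a reasonable way to make precise what the paper simply asserts.
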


\begin{corollary}
Let $G$ be the Gupta-Sidki $p$-group, $p>3$. Then $\Gamma_2(G)$ has at least $\frac{p-1}{2}$ connected components.
\label{gs}
\end{corollary}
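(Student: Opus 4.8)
The plan is to exploit the naturality of Nielsen moves under the abelianization map. Write $A=\operatorname{Ab}(G)=G/[G,G]$ and let $\phi\colon G\to A$ be the canonical projection. Since an elementary Nielsen move is given by the same word in the entries of a tuple irrespective of the ambient group, the componentwise map $\phi^{(2)}\colon G^2\to A^2$, $(g_1,g_2)\mapsto(\phi(g_1),\phi(g_2))$, carries generating pairs of $G$ to generating pairs of $A$ and commutes with every elementary Nielsen move: for instance $\phi^{(2)}(R^{+}_{12}(g_1,g_2))=(\phi(g_1)\phi(g_2),\phi(g_2))=R^{+}_{12}(\phi^{(2)}(g_1,g_2))$. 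Hence $\phi^{(2)}$ is a morphism of graphs $\Gamma_2(G)\to\Gamma_2(A)$ and therefore induces a map $\pi_0(\Gamma_2(G))\to\pi_0(\Gamma_2(A))$ on the sets of connected components. To conclude it suffices to show that this induced map is surjective, for then $\Gamma_2(G)$ has at least as many connected components as $\Gamma_2(A)$.

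First I would compute the target. For the Gupta--Sidki $p$-group one has $\operatorname{Ab}(G)\cong\mathbb{Z}_p\times\mathbb{Z}_p$ (the two standard generators have order $p$ and are independent modulo the commutator subgroup), so in the notation of Theorem \ref{thm:diakgrah} we have $r=2$, $s=0$, $m_1=m_2=p$ and $\operatorname{rank}(A)=r+s=2$. As $p>3$ we have $m_1=p\neq 2$, so the last clause of Theorem \ref{thm:diakgrah} gives that $\Gamma_2(A)=\Gamma_{r+s}(A)$ has exactly $\frac{\phi(p)}{2}=\frac{p-1}{2}$ connected components.

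It then remains to prove surjectivity of $\phi^{(2)}$ on vertices, i.e.\ that every generating pair of $A$ lifts to a generating pair of $G$; this immediately yields surjectivity on components, since a lift of any vertex in a given component of $\Gamma_2(A)$ lands in a component of $\Gamma_2(G)$ mapping onto it. Here I would invoke that $G$ is a finitely generated $p$-group: by the Burnside basis theorem a pair $(g_1,g_2)$ generates $G$ if and only if its image generates the Frattini quotient $G/\Phi(G)$, where $\Phi(G)=G^p[G,G]$. Because $A\cong\mathbb{Z}_p\times\mathbb{Z}_p$ is already elementary abelian, the canonical map $A\to G/\Phi(G)$ is an isomorphism, so $G/\Phi(G)=A$. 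Consequently, if $(\bar a,\bar b)$ generates $A$ and $(a,b)\in G^2$ is any lift, then the image of $(a,b)$ in $G/\Phi(G)$ equals $(\bar a,\bar b)$ and hence generates $G/\Phi(G)$; by Burnside $(a,b)$ generates $G$. Thus each vertex of $\Gamma_2(A)$ is hit, the induced map on components is surjective, and $\Gamma_2(G)$ has at least $\frac{p-1}{2}$ connected components.

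The routine inputs are the identification $\operatorname{Ab}(G)\cong\mathbb{Z}_p\times\mathbb{Z}_p$ and the facts that $G$ is $2$-generated and lies in $\mathfrak{C}$ as a $p$-group (both standard for the Gupta--Sidki groups, cf.\ \cite{Pervova,Per5}). The only genuinely delicate point is surjectivity on connected components; I reduce it to surjectivity on vertices, which the Burnside basis theorem settles cleanly precisely because the abelianization is elementary abelian and therefore coincides with the Frattini quotient. I expect no substantial obstacle beyond assembling these facts, and the restriction $p>3$ is only what makes the bound $\frac{p-1}{2}$ nontrivial.
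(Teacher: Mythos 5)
Your argument is correct and essentially the paper's own: the paper likewise pushes Nielsen equivalence forward along $G\to G/\Phi(G)\cong(\mathbb{Z}/p\mathbb{Z})^2$ and counts the $\frac{p-1}{2}$ components there via Theorem \ref{thm:diakgrah}, with surjectivity on vertices (hence on components) supplied by Proposition \ref{lemma:grig}. The one caveat is that the Burnside basis theorem as usually stated concerns finite $p$-groups, whereas the Gupta--Sidki group is infinite; the lifting step you need is exactly what Proposition \ref{prop:frat} and Proposition \ref{lemma:grig} establish for finitely generated $p$-groups in $\mathfrak{C}$, so you should cite those (or the fact that every proper subgroup of a finitely generated group lies in a maximal one) rather than Burnside.
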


\begin{question} Is $\Gamma_3(G)$ connected for the Grigorchuk group? Is $\Gamma_2(G)$ connected for the Gupta-Sidki $3$-group?
\end{question}
\bigskip

\textbf{Acknowledgements.} 
It is a pleasure to thank my advisor Tatiana Smirnova-Nagnibeda and Pierre de la Harpe for numerous discussions, valuable questions and beneficial comments, and also for revising the introduction. I am also grateful to Tsachik Gelander and Rostislav Grigorchuk for discussions, Alexei Myasnikov for bringing my attention to the problem in \cite{BorLubMyas}, and Christian Hagendorf for comments on a preliminary version of the manuscript.

\section{Class $\mathfrak{C}$}\label{AC}

The class $\mathfrak{C}$ is a class of finitely generated groups for which every maximal subgroup is normal.

We recall that the \emph{Frattini subgroup} $\Phi(G)$ of a group $G$ is the intersection of all maximal subgroups of $G$, and $\Phi(G)=G$ if $G$ does not have maximal subgroups.

\begin{proposition} Let $G$ be a finitely generated group. Then $G$ is in $\mathfrak{C}$ if and only if all normally generating sets are generating sets. Moreover, if $G$ is in $\mathfrak{C}$, then $[G,G]\leq \Phi(G)$.
\label{obs}
\end{proposition}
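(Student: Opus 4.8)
The plan is to derive both directions from the single structural fact that, in a finitely generated group, every proper subgroup is contained in a maximal subgroup. I would first record this fact via a Zorn's lemma argument: given a proper subgroup $H<G$, the poset of proper subgroups containing $H$ is inductive, since the union of a chain of proper subgroups is a proper subgroup. Indeed, the union of a chain is a subgroup, and if it were all of $G$ then each of the finitely many generators of $G$ would lie in some member of the chain, hence all of them would lie in a single member (the chain being totally ordered), forcing that member to equal $G$ — a contradiction. Thus a maximal element exists, and this is the only step that genuinely uses finite generation.

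For the forward implication, suppose $G\in\mathfrak{C}$ and let $S$ be a normally generating set, so $\ll S\gg\ =G$. If $\langle S\rangle\neq G$, then by the fact above $\langle S\rangle$ lies in some maximal subgroup $M$, which is normal by hypothesis. Since $M$ is normal and contains $S$, it contains the normal closure $\ll S\gg\ =G$, contradicting $M\neq G$. Hence $\langle S\rangle=G$, i.e. every normally generating set is generating.

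For the converse I would argue by contraposition. If $G\notin\mathfrak{C}$, choose a maximal subgroup $M$ that is not normal. Then $\ll M\gg$ is a normal subgroup strictly containing $M$, the strictness coming precisely from $M$ not being normal; maximality of $M$ then forces $\ll M\gg\ =G$. Thus $M$, viewed as a subset of $G$, is a normally generating set with $\langle M\rangle=M\neq G$, so not every normally generating set is generating.

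For the ``moreover'' claim, I would use that every maximal subgroup $M$ of $G\in\mathfrak{C}$ is normal, so that $G/M$ is a nontrivial group with no proper nontrivial subgroups, these corresponding to subgroups strictly between $M$ and $G$. Any such group is cyclic of prime order, in particular abelian, whence $[G,G]\subseteq M$. Intersecting over all maximal subgroups yields $[G,G]\subseteq\Phi(G)$; the case where $G$ has no maximal subgroups is vacuous since then $\Phi(G)=G$ by convention. I expect the main obstacle to be isolating and cleanly stating the existence-of-maximal-subgroups fact, as it is the place where finite generation is indispensable, whereas the remaining steps are direct manipulations of normal closures and quotients.
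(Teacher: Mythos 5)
Your proof is correct and follows essentially the same route as the paper: reduce both directions to the fact that in a finitely generated group every proper subgroup lies in a maximal one (which the paper simply cites to Neumann, whereas you supply the Zorn's lemma argument), and derive $[G,G]\leq\Phi(G)$ from $G/M$ being cyclic of prime order for each maximal $M$. Your added remark that the strictness of $M<\,\ll M\gg$ comes from $M$ being non-normal makes the converse direction slightly more explicit than the paper's wording, but the argument is the same.
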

\begin{proof} Let $G$ be in $\mathfrak{C}$ and assume by contradiction that there is a normally generating set $S$ which is not a generatng set. Since $G$ is finitely generated, any proper subgroup is contained in some proper maximal subgroup \cite{Neum}, therefore $\langle S\rangle\leq M<G$ for some proper maximal subgroup $M$ . Then $\ll S\gg=G\leq M^G=M< G$. It is a contradiction.

Conversely, we will prove that if all normally generating sets are generating sets then all maximal subgroups are normal. We prove the equivalent statement: if there is a maximal subgroup $M$ which is not normal, then there exists a normally generating set which is not a generating set. We take as a normally generating set $S=M$. Then $\ll S\gg=G$ since $M$ is maximal and $\langle S\rangle\neq G$ since $M$ is proper. 

Futhermore, if $G$ is in $\mathfrak{C}$ then for any maximal subgroup $M$ the quotient $G/M$ is isomorphic to $\mathbb{Z}/p\mathbb{Z}$ by the correspondence theorem, in particular, $G/M$ is abelian. Therefore $[G,G]\leq M$ and we conclude that $[G,G]\leq \Phi(G)$.\end{proof}

\begin{proposition} Let $G$ be in $\mathfrak{C}$ and $n\geq 1$. Then the natural maps 
\begin{align*} &\pi_n: Epi(F_n,G)\rightarrow Epi(F_n, G/[G,G]), \\ &\phi_n: Epi(F_n,G)\rightarrow Epi(F_n, G/\Phi(G)) \end{align*} are surjective.
\label{lemma:grig}
\end{proposition}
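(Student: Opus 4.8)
The plan is to treat both maps uniformly by exploiting the freeness of $F_n$ to produce lifts, and then to use the Frattini subgroup to upgrade each lift to an epimorphism. Both maps are clearly well defined to begin with, since the composition of an epimorphism $F_n \twoheadrightarrow G$ with either quotient map $G \twoheadrightarrow G/[G,G]$ or $G \twoheadrightarrow G/\Phi(G)$ is again surjective.

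First I would fix a free basis $(x_1,\ldots,x_n)$ of $F_n$ and let $N$ denote either $[G,G]$ or $\Phi(G)$. Given an epimorphism $\bar\psi \in \operatorname{Epi}(F_n, G/N)$, I would choose, for each $i$, an element $g_i \in G$ mapping to $\bar\psi(x_i)$ under the quotient $G \to G/N$. By the universal property of the free group, the assignment $x_i \mapsto g_i$ extends uniquely to a homomorphism $\psi \colon F_n \to G$, and by construction the composite $F_n \xrightarrow{\psi} G \to G/N$ agrees with $\bar\psi$ on the basis, hence equals $\bar\psi$. Thus $\pi_n$ (resp. $\phi_n$) of $\psi$ is $\bar\psi$, provided $\psi$ is itself an epimorphism.

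The remaining point — and the only place where the hypotheses really enter — is to show that the lift $\psi$ is surjective. Write $H=\langle g_1,\ldots,g_n\rangle = \psi(F_n)$. Surjectivity of $\bar\psi$ means exactly that the cosets $g_i N$ generate $G/N$, i.e.\ $HN = G$. For $N=\Phi(G)$ this is the standard non-generator property of the Frattini subgroup: since $G$ is finitely generated, any proper subgroup lies in a maximal subgroup (by \cite{Neum}), so $H\neq G$ would give $H \leq M$ for some maximal $M$, whence $H\Phi(G) \leq M < G$, contradicting $H\Phi(G)=G$. Therefore $H=G$ and $\phi_n$ is surjective. For $N=[G,G]$, I would invoke Proposition \ref{obs}, which gives $[G,G]\leq \Phi(G)$ precisely because $G\in\mathfrak{C}$; then $H[G,G]=G$ forces $H\Phi(G)=G$, and the same Frattini argument yields $H=G$, so $\pi_n$ is surjective.

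I expect no serious obstacle: the substance is entirely in the Frattini non-generator argument, and the role of the class $\mathfrak{C}$ is confined to the inclusion $[G,G]\leq\Phi(G)$ supplied by Proposition \ref{obs}. The one point to state carefully is that finite generation of $G$ is exactly what guarantees that proper subgroups sit inside maximal ones, which is the hypothesis making the Frattini argument valid; without it the lifted homomorphism need not be onto.
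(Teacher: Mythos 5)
Your proof is correct and follows essentially the same route as the paper: lift the images of a free basis, and use the fact that a proper subgroup of a finitely generated group lies in a maximal subgroup $M$, which contains $\Phi(G)$ (and hence $[G,G]$, by Proposition \ref{obs}), to force the lifted tuple to generate all of $G$. The only cosmetic difference is that you treat the $\Phi(G)$ case first and deduce the $[G,G]$ case from the inclusion $[G,G]\leq\Phi(G)$, whereas the paper argues the $[G,G]$ case directly and notes the other is identical.
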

\begin{proof} We will now prove the case of $\pi_n$. Consider the projection $\pi: G\rightarrow G/[G,G]$. 

If $n<\operatorname{rank}(G/[G,G])$, the sets $\operatorname{Epi}(F_n,G/[G,G])$ and $\operatorname{Epi}(F_n, G)$ are empty and there is nothing to prove. Assume now that $n\geq \operatorname{rank}(G/[G,G])$. Let $s_1, ..., s_n \in G$ such that $\pi(s_1),...,\pi(s_n)$ generate $G/[G,G]$. Suppose $\langle s_1,...,s_n\rangle \leq M$ for some maximal subgroup $M$ of $G$; we will obtain a contradiction. By Proposition \ref{obs}, $[G,G]\leq M$, and we have $M/[G,G]=\pi(M)=\pi(G)=G/[G,G]$. Hence $\{e\}=\frac{G/[G,G]}{M/[G,G]}\cong G/M$. This is a contradiction since $M$ is a proper subgroup of $G$. The proof for $\phi_n$ repeats the one above with $[G,G]$ replaced by $\Phi(G)$. \end{proof}

\section{Andrews-Curtis equivalence}\label{AC}
There are doubts as to whether the Andrews-Curtis conjecture is true. A possible way to disprove it would be to find two normally generating systems of $F_n$ such that their images are not Andrews-Curtis equivalent in some finitely generated group. 

Akbulut and Kirby \cite{AkKir} suggest a series of potential counter-examples for $F_2=\langle x,y\rangle$, i.e., normally generating tuples which are not known to be AC equivalent to $(x,y)$: \begin{equation}(u,v_l)=(xyxy^{-1}x^{-1}y^{-1},x^l y^{-(l+1)}),\text{ } l\geq 1. \label{ce}\end{equation}


It was suggested in \cite{BorKhukMyas} that one could confirm one of these potential counter-examples by showing that for some homomorphism $\phi: F_2\rightarrow G$ to a finite group $G$, the images of the pairs (\ref{ce}) are not Andews-Curtis equivalent. 

Notice that in an abelian group of rank $2$: \begin{equation*}(xyxy^{-1}x^{-1}y^{-1},x^l y^{-(l+1)})\sim_{AC} (x,y)\end{equation*} so for every homomorphism $\phi:F_2\rightarrow A$ into an abelian group $A$, the images of the pairs (\ref{ce}) are AC equivalent. 

In view of the latter, \cite{BorLubMyas} considered the class of groups with the following property: for any $n\geq \operatorname{max}\{w(G),2\}$, two normally generating $n$-tuples $U,V$ are AC equivalent in $G$ if and only if their images are AC equivalent in the abelianization $\operatorname{Ab}(G)$. Therefore groups from this class will not confirm the potential counter-examples (\ref{ce}). Borovik, Lubotzky and Myasnikov \cite{BorLubMyas} proved that all finite groups belong to this class. They also ask whether it is true for the Grigorchuk group. Our Theorem \ref{GACC} answers their question positively.

\bigskip
We will now proceed to the proof of Theorem \ref{GACC}.
\\First, we present a few well-known properties of the Frattini subgroup:
\begin{lemma}
Let $G$ be a group. 
\begin{enumerate}
\item \cite[5.2.12]{Rob}  Frattini subgroup $\Phi(G)$ is equal to the set of non-generators of $G$, i.e., if $g\in \Phi(G)$ and $\langle g,X\rangle=G$ then $\langle X\rangle=G$. 
\item \cite{Evans} Let $G=\langle x_1,...,x_n\rangle$ and $\varphi_1,...,\varphi_n \in \Phi(G)$. Then \begin{center}$\langle x_1\varphi_1,...,x_n\varphi_n\rangle=G$.\end{center}
\end{enumerate}
\end{lemma}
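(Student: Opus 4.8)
The plan is to prove part~(1) directly from the definition of $\Phi(G)$ as the intersection of all maximal subgroups, by a Zorn's lemma argument, and then to derive part~(2) as a purely formal consequence of~(1) by peeling off the elements $\varphi_i$ one at a time. I would not assume $G$ finitely generated in~(1), since the statement is meant for general $G$ (with the convention $\Phi(G)=G$ when there are no maximal subgroups), and this is exactly where the only real care is needed.

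For part~(1), I would prove both inclusions between $\Phi(G)$ and the set $N$ of non-generators. For $\Phi(G)\subseteq N$, take $g\in\Phi(G)$ and suppose toward a contradiction that $\langle g,X\rangle=G$ while $\langle X\rangle\neq G$. Consider the poset $\mathcal{P}$ of subgroups $K\leq G$ with $X\subseteq K$ and $g\notin K$, ordered by inclusion. It is nonempty, since $\langle X\rangle\in\mathcal{P}$: if we had $g\in\langle X\rangle$ then $G=\langle g,X\rangle=\langle X\rangle$, contradicting $\langle X\rangle\neq G$. The union of any chain in $\mathcal{P}$ is again a subgroup containing $X$ and omitting $g$, so Zorn's lemma yields a maximal element $M\in\mathcal{P}$. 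I claim $M$ is a maximal subgroup of $G$: any $K$ with $M<K\leq G$ must contain $g$ (otherwise $K\in\mathcal{P}$ would contradict maximality of $M$), hence $K\supseteq\langle g,M\rangle\supseteq\langle g,X\rangle=G$. Thus $M$ is a maximal subgroup with $g\notin M$, contradicting $g\in\Phi(G)$. For the reverse inclusion $N\subseteq\Phi(G)$, I argue by contrapositive: if $g\notin\Phi(G)$, choose a maximal subgroup $M$ with $g\notin M$; then $\langle g,M\rangle=G$ by maximality, while $\langle M\rangle=M\neq G$, so $g$ fails to be a non-generator (take $X=M$).

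For part~(2), set $H=\langle x_1\varphi_1,\dots,x_n\varphi_n\rangle$. First observe that $\langle H,\varphi_1,\dots,\varphi_n\rangle=G$, because this subgroup contains each $x_i=(x_i\varphi_i)\varphi_i^{-1}$ and the $x_i$ generate $G$. Now apply part~(1) repeatedly: since $\varphi_n\in\Phi(G)$ is a non-generator and $\langle\varphi_n,\,H\cup\{\varphi_1,\dots,\varphi_{n-1}\}\rangle=G$, we may drop $\varphi_n$ to get $\langle H,\varphi_1,\dots,\varphi_{n-1}\rangle=G$. Iterating this removal for $\varphi_{n-1},\dots,\varphi_1$ in turn leaves $\langle H\rangle=H=G$, as desired.

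The only genuinely delicate point is the Zorn's lemma step in part~(1): one must verify that the maximal element $M$ of $\mathcal{P}$ is actually a maximal subgroup of $G$ and not merely maximal among subgroups avoiding $g$, and that $\mathcal{P}$ is nonempty so the argument does not vacuously collapse. Everything in part~(2) is then bookkeeping, the main subtlety there being to invoke the non-generator property in its exact form, namely that $\langle g,X\rangle=G$ implies $\langle X\rangle=G$, with $X$ taken to be the remaining generating set at each stage.
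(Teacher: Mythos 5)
Your proof is correct. The paper itself offers no proof of this lemma, merely citing \cite[5.2.12]{Rob} for part (1) and \cite{Evans} for part (2); your Zorn's-lemma argument producing a maximal subgroup avoiding $g$ (with the check that $\mathcal{P}$ is nonempty and closed under unions of chains), together with the reverse inclusion via a maximal subgroup omitting $g$, is exactly the classical argument in Robinson, and your iterated removal of the $\varphi_i$ in part (2) is the standard deduction used by Evans, so your write-up supplies precisely what the paper leaves to the references.
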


\begin{proof}[Proof of Theorem \ref{GACC}] Let $G$ be in $\mathfrak{C}$. Then by Proposition \ref{obs} normally generating sets coincide with generating sets of $G$ and therefore $\operatorname{rank}(G)=w(G)$. 

Consider two generating tuples $U,V$ in $G$ which are AC equivalent. Then for any normal subgroup $N\lhd G$ their images in $G/N$ are AC equivalent. In particular, it is true for $N=[G,G]$. 

Assume now that the images of $U,V$ in $\operatorname{Ab}(G)$ are AC equivalent. Let us first consider the case $n=\operatorname{rank}(G)$ and $\operatorname{rank}(G)=2$. Proving that $U=(u_1,u_2),V=(x,y)$ are AC equivalent is equivalent to proving that $(x,y)$ and $(x\varphi_1, y\varphi_2)$ are AC equivalent for $\forall \varphi_1,\varphi_2 \in [G,G]$. 

Denote by $\operatorname{ord}_G(g)$ the order of $g$ in $G$. 
One computes that 
\begin{equation*}\{x^{n_1}y^{n_2}| n_1\in (-\operatorname{ord}_G(x),\operatorname{ord}_G(x)), n_2\in (-\operatorname{ord}_G(y),\operatorname{ord}_G(y))\}\end{equation*} is a right coset representative system for $G$ mod $[G,G]$.  Using the Reidemeister-Schreier rewriting process we find a set $S$ of generators for $[G,G]$: \begin{center} $S=\{x^{n_1}y^{n_2}x(x^{n_1+1}y^{n_2})^{-1}, n_1\in (-\operatorname{ord}_G(x),\operatorname{ord}_G(x)), n_2\in (-\operatorname{ord}_G(y),\operatorname{ord}_G(y))\}$. \end{center} 

We will proceed by induction on the length of $\varphi_1\in [G,G]$ in generators of $S$ to prove that $(x,y)$ and $(x\varphi_1, y)$ are AC equivalent. Let $\varphi_1=s\varphi_1'$ with $s\in S$ and $\varphi_1' \in [G,G]$ such that $l_S(\varphi_1')<l_S(\varphi_1)$. Then 
\begin{center}
$(x\varphi_1,y)=(xs\varphi_1',y)=(x\cdot x^{n_1}y^{n_2}xy^{-n_2}x^{-n_1-1}\varphi_1',y)\sim_{I_2,AC} (x\cdot x^{n_1}y^{n_2}xy^{-n_2}x^{-n_1-1}\varphi_1',x^{n_1+1}y^{-1}x^{-n_1-1})\sim_{L_{12} \text{ } |n_1|+1 \text{ times}} (x^{n_1+2}y^{-n_2}x^{-n_1-1}\varphi_1',x^{n_1+1}y^{-1}x^{-n_1-1})\sim_{AC, I_2}(x^{n_1+2}y^{-n_2}x^{-n_1-1}\varphi_1',y)\sim_{AC}(x^{n_1+2}y^{-n_2}x^{-n_1-1}\varphi_1',x^{n_1+2}yx^{-n_1-2})\sim_{L_{12} \text{ }|n_2| \text{ times}} (x\varphi_1',x^{n_1+2}yx^{-n_1-2})\sim_{AC} (x\varphi_1',y)$. 
\end{center} 
By induction we conclude that $(x\varphi_1,y)$ and $(x,y)$ are AC equivalent. 

\bigskip
Now let $\hat{x}=x\varphi_1$ and $G=\langle \hat{x},y\rangle$. By a similar procedure using the Reidemeister-Schreier rewriting process we find $[G,G]=\langle S\rangle$ where \begin{equation*}S=\{\hat{x}^{n_1}y^{n_2}\hat{x}(\hat{x}^{n_1+1}y^{n_2})^{-1}, n_1\in (-\operatorname{ord}_G(\hat{x}),\operatorname{ord}_G(\hat{x})), n_2\in (-\operatorname{ord}_G(y),\operatorname{ord}_G(y))\}.\end{equation*} Let $\varphi_2=s\varphi_2'$ with $s\in S$ and $\varphi_2' \in [G,G]$ such that $l_S(\varphi_2')<l_S(\varphi_2)$. Then 
\begin{center}
$(\hat{x},y\varphi_2)=(\hat{x},y\hat{x}^{n_1}y^{n_2}\hat{x}(\hat{x}^{n_1+1}y^{n_2})^{-1}\varphi_2')\sim_{I_1,AC}(y\hat{x}^{n_1}y^{n_2}\hat{x}^{-1}y^{-n_2}\hat{x}^{-n_1}y^{-1},y\hat{x}^{n_1}y^{n_2}\hat{x}(\hat{x}^{n_1+1}y^{n_2})^{-1}\varphi_2')\sim_{L_{21},AC,I_1} (\hat{x},y\hat{x}^{-1}\varphi_2')\sim_{AC}(y\hat{x}y^{-1},y\hat{x}^{-1}\varphi_2')\sim_{L_{21},AC} (\hat{x},y\varphi_2')$.\end{center} By induction $(\hat{x},y)$ and $(\hat{x}, y\varphi_2)$ are AC equivalent. We conclude that $(x,y)$ and $(x\varphi_1,y\varphi_2)$ are AC equivalent. 

\medskip
Let $n=\operatorname{rank}(G)\geq 3$ and $G=\langle x_1,...,x_n \rangle$. Then $[G,G]=\langle S\rangle$, with $S=\{x_1^{m_1}...x_n^{m_n}x_l(x_1^{m_1}...x_l^{m_l+1}...x_n^{m_n})^{-1}, m_i\in (-\operatorname{ord}_G(x_i), \operatorname{ord}_G(x_i)),$\\$ m_n\neq 0, 1\leq l<n\}$. Then the proof above can be repeated  with more similar cases to consider. 

\medskip
Finally, assume $n>\operatorname{rank}(G)$, and for simplicity $\operatorname{rank}(G)=2$. Let us fix a system of generators $\{x,y\}$ of $G$. 

Since $G/[G,G]$ is abelian and $n>\operatorname{rank}(G/[G,G])$, the $\Delta_n(G/[G,G])=\Gamma_n(G/[G,G])$ is connected by Theorem \ref{thm:diakgrah}. We need to prove that $\Delta_n(G)$ is connected. This is equivalent to proving that $(x\varphi_1, y\varphi_2,\varphi_3,...,\varphi_n)\sim_{AC}(x,y,1,...,1)$,  $\forall \varphi_1,...,\varphi_n \in [G,G]$. We use that $(x\varphi_1, y\varphi_2)\sim_{AC}(x,y)$ $\forall \varphi_1,\varphi_2\in [G,G]$, and conclude with the following:  $(x\varphi_1, y\varphi_2,\varphi_3,...,\varphi_n)\sim_{AC} (x,y,\varphi_3,...,\varphi_n)\sim_{AC}(x,y,\varphi_3\cdot (\varphi_3)^{-1},...,\varphi_n\cdot(\varphi_n)^{-1})=(x,y,1,...,1)$. 

If $n>\operatorname{rank}(G)>2$ then one concludes that $\Delta_n(G)$ is connected with the same type of arguments. \end{proof}

\medskip
There is a different proof for the fact that if $\langle x_1,...,x_n\rangle=G$ then $\forall c\in [G,G]$: $(x_1,...,x_n)\sim_{AC} (x_1c,x_2,...,x_n)$ (see \cite[Property 2]{Myas}).

\begin{question}
Is it true that, if $\Delta_n(G)$ is connected for some $n\geq w(G)$, then $\Delta_m(G)$ is connected for every $m>n$?
\label{q}
\end{question}
The following proposition gives a partial answer to Question $\ref{q}$:
\begin{proposition}
Let $G$ be in $\mathfrak{C}$ and suppose $\Delta_n(G)$ is connected for some $n\geq w(G)$. Then $\Delta_k(G)$ is connected for every $k\geq n+1$.
\label{conn}
\end{proposition}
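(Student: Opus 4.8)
The plan is to prove the single-step stabilization ``$\Delta_m(G)$ connected $\Rightarrow$ $\Delta_{m+1}(G)$ connected'' for every $m\geq w(G)$, and then iterate starting from $m=n$ to reach all $k\geq n+1$. Throughout I use Proposition \ref{obs}: for $G\in\mathfrak{C}$ normally generating tuples coincide with generating tuples, so $\operatorname{rank}(G)=w(G)$, and $[G,G]\leq\Phi(G)$. I also use that $\operatorname{rank}(\operatorname{Ab}(G))\leq\operatorname{rank}(G)=w(G)\leq m$, being the rank of a quotient.

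The heart of the argument is a reduction lemma: for every $m\geq w(G)$, each generating $(m+1)$-tuple $(u_1,\dots,u_{m+1})$ is AC equivalent to a tuple of the form $(g_1,\dots,g_m,1)$ with $(g_1,\dots,g_m)$ generating $G$. To prove it I would first pass to the abelianization. Since $m+1>\operatorname{rank}(\operatorname{Ab}(G))$, Theorem \ref{thm:diakgrah} gives that $\Gamma_{m+1}(\operatorname{Ab}(G))$ is connected, so the image of our tuple is Nielsen equivalent in $\operatorname{Ab}(G)$ to a (padded standard) generating tuple whose last coordinate is trivial. Applying the corresponding elementary Nielsen moves upstairs -- they are AC moves and project to the matching moves on $\operatorname{Ab}(G)$ -- transforms $(u_1,\dots,u_{m+1})$ into some $(g_1,\dots,g_m,\varphi)$ with $\varphi\in[G,G]\leq\Phi(G)$. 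The images $g_1,\dots,g_m$ generate $\operatorname{Ab}(G)$, hence $\langle g_1,\dots,g_m\rangle\Phi(G)=G$, and since the Frattini subgroup consists of non-generators we get $\langle g_1,\dots,g_m\rangle=G$. It then remains to kill $\varphi$: writing $\varphi\in G=\langle g_1,\dots,g_m\rangle$ as a product of conjugates $w\,g_j^{\pm1}w^{-1}$, I eliminate these factors one at a time in the last coordinate by conjugating it by $w$ (moves $AC_{m+1,s}$, available because $S=G$), right-multiplying by $g_j^{\pm1}$ (move $R_{m+1,j}^{\pm}$), and conjugating back, so that $(g_1,\dots,g_m,\varphi)\sim_{AC}(g_1,\dots,g_m,1)$.

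Granting the reduction lemma, the stabilization step is immediate. Given $\Delta_m(G)$ connected, any two generating $(m+1)$-tuples are AC equivalent to tuples $(g_1,\dots,g_m,1)$ and $(g_1',\dots,g_m',1)$. A path from $(g_1,\dots,g_m)$ to $(g_1',\dots,g_m')$ in the connected graph $\Delta_m(G)$ lifts coordinatewise to a path in $\Delta_{m+1}(G)$ that leaves the trivial last coordinate untouched, since the elementary AC moves involved act only on the first $m$ entries and on conjugations. Hence $\Delta_{m+1}(G)$ is connected. Applying this with $m=n,n+1,n+2,\dots$ -- each step legitimate because $m\geq n\geq w(G)$ -- yields connectedness of $\Delta_k(G)$ for all $k\geq n+1$.

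I expect the reduction lemma to be the main obstacle, and within it the delicate point is trivializing the last coordinate modulo $[G,G]$ while retaining control of generation. This is precisely where membership in $\mathfrak{C}$ enters, through $[G,G]\leq\Phi(G)$ combined with the non-generator property of $\Phi(G)$; the final elimination of the commutator entry by the conjugate-and-multiply maneuver is routine but uses essentially that all of $G$ is available as conjugators, i.e.\ the case $S=G$.
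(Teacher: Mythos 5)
Your proof is correct and follows essentially the same route as the paper's: project to $\operatorname{Ab}(G)$, invoke Theorem \ref{thm:diakgrah} for $m+1\geq \operatorname{rank}(\operatorname{Ab}(G))+1$, lift the moves back to $G$, and use $[G,G]\leq\Phi(G)$ to keep the first $m$ entries generating. The only (harmless) difference is the endgame: the paper uses the spare trivial coordinate to normalize all the way down to the fixed tuple $(x_1,\dots,x_n,1)$ --- so it never actually needs the hypothesis that $\Delta_n(G)$ is connected --- whereas you stop at $(g_1,\dots,g_m,1)$ and appeal to that hypothesis via a path-lifting induction.
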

\begin{proof}
Let $G=\ll x_1,...,x_n\gg$. Since $G$ is in $\mathfrak{C}$ we have $\langle x_1,...,x_n\rangle=G$. 
By assumption $\Delta_n(G)$ is connected for some $n\geq w(G)$. It is sufficient to prove the statement for $k=n+1$. We need to show that for each normally generating $(n+1)$-tuple $(g_1,...,g_{n+1})$ there is a sequence of Andrews-Curtis moves which transforms $(g_1,...,g_{n+1})$ into $(x_1,...,x_n,1)$. Consider the natural projection $\pi: G\rightarrow \operatorname{Ab}(G)$. Notice that $n+1\geq \operatorname{rank}(\operatorname{Ab}(G))+1$ and therefore $\Delta_{n+1}(\operatorname{Ab}(G))$ is connected by Theorem \ref{thm:diakgrah}.  Hence $(\overline{g}_1,...,\overline{g}_{n+1})\sim_{AC}(\overline{x}_1,...,\overline{x}_n,\overline{1})$ in $\operatorname{Ab}(G)$, and consequently $(g_1,...,g_{+1})\sim_{AC}(x_1c_1,...,x_nc_n,c_{n+1})$ for some $c_1,...,c_{n+1}\in [G,G]$. Since $G$ is in $\mathfrak{C}$ we have that $[G,G]\leq \Phi(G)$ and therefore $\langle x_1c_1,...,x_nc_n\rangle=G$. Hence \begin{align*}&(x_1c_1,...,x_nc_n,c_{n+1})\sim_{AC} (x_1c_1,...,x_nc_n,c_{n+1}\cdot c_{n+1}^{-1})=(x_1c_1,...,x_nc_n,1)\sim_{AC}\\ &(x_1c_1,...,x_nc_n,c_{1})\sim_{AC}(x_1,x_2c_2...,x_nc_n,1)\sim_{AC}...\sim_{AC}(x_1,...,x_n,1). \text{ } \end{align*} \end{proof}

\begin{proof}[Proof of Corollary \ref{ACGrig}] Since $G/[G,G]\cong (\mathbb{Z}/2\mathbb{Z})^3$ (see, for example, \cite{dlHar}, VIII.22),  the graph $\Delta_3(G/[G,G])=\Delta_3((\mathbb{Z}/2\mathbb{Z})^3)=\Gamma_3((\mathbb{Z}/2\mathbb{Z})^3)$ is connected for $n\geq 3$ by Theorem \ref{thm:diakgrah}. To obtain connectedness of $\Delta_n(G)$, $n\geq 3$, apply Theorem \ref{GACC} and Proposition \ref{conn}. \end{proof}

\section{Nielsen equivalence}\label{N}

The question about transitivity of the action of $\operatorname{Aut}F_n$ on $\operatorname{Epi}(F_n,G)$ was raised in 1951 by B.H. Neumann and H. Neumann \cite{NN} and has been studied in different contexts, see \cite{Ev06,Lub,Pak}.  If $G$ is a fundamental group of a closed surface $\Sigma$ then the action of  $\operatorname{Aut}F_n$ is transitive already for $n=\operatorname{rank}(G)$ \cite{Loud, Zies}. There are examples of finitely generated groups when it is not. For instance, for finitely generated abelian groups (see Theorem \ref{thm:diakgrah}) with $m_1\geq 3$. But once $n\geq \operatorname{rank}(G)+1$ transitivity is easier to get. It holds for finitely generated nilpotent groups \cite{Evans}, for finite solvable groups \cite{Dun70}, as well as for $PSL(2,q)$ for $n\geq 4$ and $q$ a prime power \cite{Gar}. 

\begin{proposition}\cite{Evans}
Let $G$ be a  finitely generated group and $n\geq \operatorname{rank}(G)+1$. If $\Gamma_n(G/\Phi(G))$ is connected then $\Gamma_n(G)$ is connected.
\label{evans}
\end{proposition}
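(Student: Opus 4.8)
The plan is to exploit the projection $\pi\colon G\to Q:=G/\Phi(G)$ together with the functoriality of Nielsen moves: a move performed on a tuple in $G$ projects to the same move on its image in $Q$, so any edge-path in $\Gamma_n(Q)$ starting at $\pi(V)$ lifts to a path in $G^n$ starting at $V$, and each intermediate lifted tuple again generates $G$ by the non-generator property of $\Phi(G)$. Since connectedness of $\Gamma_n(G)$ means exactly that any two generating $n$-tuples $U,V$ are Nielsen equivalent, I will connect an arbitrary $U$ and $V$ through a convenient normal form.

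First I would establish that every generating $n$-tuple is Nielsen equivalent to one of the shape $(c_1,\dots,c_{n-1},1)$ with $\langle c_1,\dots,c_{n-1}\rangle=G$. Indeed, since $\operatorname{rank}(Q)\le\operatorname{rank}(G)\le n-1$, the group $Q$ admits a generating $(n-1)$-tuple $(\bar c_1,\dots,\bar c_{n-1})$, so $(\bar c_1,\dots,\bar c_{n-1},1)$ is a vertex of $\Gamma_n(Q)$. As $\Gamma_n(Q)$ is connected by hypothesis, $\pi(V)$ is Nielsen equivalent to it, and lifting this path transforms $V$ into a tuple whose image is $(\bar c_1,\dots,\bar c_{n-1},1)$. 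The non-generator property then forces $\langle c_1,\dots,c_{n-1}\rangle=G$, while the last entry lies in $\Phi(G)\subseteq G=\langle c_1,\dots,c_{n-1}\rangle$ and can be cleared to $1$ by moves $R_{nj}$, each touching only the last coordinate.

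To connect $U$ and $V$ I would proceed in two stages. Using connectedness of $\Gamma_n(Q)$, lift a path taking $\pi(U)$ to $\pi(V)$ to obtain $U'$ with $\pi(U')=\pi(V)$; then run the path that normalizes $V$ simultaneously on $U'$. Because $\pi(U')=\pi(V)$, the $Q$-images stay equal throughout, so $U'$ lands on $(c_1\sigma_1,\dots,c_{n-1}\sigma_{n-1},w)$ with all $\sigma_i,w\in\Phi(G)$, which after clearing the last coordinate becomes $(c_1\sigma_1,\dots,c_{n-1}\sigma_{n-1},1)$. Everything then reduces to the key claim: for $\sigma_i\in\Phi(G)$ with $\langle c_1,\dots,c_{n-1}\rangle=G$,
$$(c_1\sigma_1,\dots,c_{n-1}\sigma_{n-1},1)\ \text{is Nielsen equivalent to}\ (c_1,\dots,c_{n-1},1).$$

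The main obstacle is precisely this claim, i.e.\ modifying an interior coordinate $c_i$ by a Frattini element: the complement $\langle c_1,\dots,\widehat{c_i},\dots,c_{n-1},1\rangle$ need not be all of $G$, so one cannot directly right-multiply $c_i$ by $\sigma_i$. I would resolve it by using the free last slot as scratch space: a move $R_{ni}$ temporarily sets the last coordinate equal to $c_i$, after which the subgroup generated by all coordinates except the $i$-th equals $\langle c_1,\dots,c_{n-1}\rangle=G$ and hence contains $\sigma_i$; now $R_{ij}$-moves with $j\neq i$ realize $c_i\mapsto c_i\sigma_i$, and finally the last coordinate is cleared back to $1$, which is legitimate because $(c_1,\dots,c_i\sigma_i,\dots,c_{n-1})$ still generates $G$ by the non-generator property. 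Iterating over $i=1,\dots,n-1$ introduces all the $\sigma_i$ and proves the claim. This last step is exactly where the hypothesis $n\ge\operatorname{rank}(G)+1$ is essential, both to guarantee a normal form with a trivial coordinate and to supply the scratch coordinate.
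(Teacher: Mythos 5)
Your proposal is correct and follows essentially the same route as the paper: project to $G/\Phi(G)$, lift a connecting path so that the tuples differ only by Frattini elements coordinatewise, invoke the non-generator property of $\Phi(G)$ to keep generation, and use the spare trivial coordinate as scratch space to absorb the Frattini perturbations. The only cosmetic differences are that the paper normalizes everything to the fixed base point $(x_1,\dots,x_d,1,\dots,1)$ with $d=\operatorname{rank}(G)$ rather than connecting $U$ to $V$ directly, and it clears each $\varphi_i$ by placing it into the spare slot instead of copying $c_i$ there.
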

\begin{proof} Let $\operatorname{rank}(G)=d$ and
 $G=\langle x_1,...,x_d \rangle$. Assume that $\Gamma_n(G/\Phi(G))$ is connected. Then any generating system $(g_1,...,g_n)$ of $G$ is Nielsen equivalent to $(x_1\varphi_1,...,x_d\varphi_d, \varphi_{d+1},...,\varphi_n)$ for some $\varphi_1,...,\varphi_n \in \Phi(G)$. Since $\varphi_1,...,\varphi_d \in \Phi(G)$ then $\langle x_1\varphi_1,...,x_d\varphi_d\rangle =G$, and we conclude the proof with the following:
\begin{center} $(g_1,...,g_n)\sim (x_1\varphi_1,...,x_d\varphi_d, \varphi_{d+1},...,\varphi_n)\sim (x_1\varphi_1,...,x_d\varphi_d, 1,...,1)\sim (x_1\varphi_1,...,x_d\varphi_d,\varphi_1,1...,1)\sim  (x_1,x_2\varphi_2...,x_d\varphi_d,1...,1)\sim (x_1,x_2,...,x_d,1...,1)$.\end{center}  \end{proof}

In fact, for groups in the class $\mathfrak{C}$ the converse is also true by Proposition \ref{lemma:grig}:

\begin{corollary}
Let $G$ be in $\mathfrak{C}$ and $n\geq\operatorname{rank}(G)+1$. Then $\Gamma_n(G)$ is connected if and only if $\Gamma_n(G/\Phi(G))$ is connected.
\label{con}
\end{corollary}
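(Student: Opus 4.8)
The plan is to establish the corollary as an immediate consequence of Proposition \ref{evans} together with the surjectivity result of Proposition \ref{lemma:grig}. Proposition \ref{evans} already gives the implication that connectedness of $\Gamma_n(G/\Phi(G))$ forces connectedness of $\Gamma_n(G)$, and this direction holds for \emph{any} finitely generated group with $n\geq\operatorname{rank}(G)+1$; in particular it applies to groups in $\mathfrak{C}$. So the only content left to prove is the reverse implication: assuming $\Gamma_n(G)$ is connected, deduce that $\Gamma_n(G/\Phi(G))$ is connected.

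For the reverse direction I would argue by transporting Nielsen moves along the quotient map $\phi\colon G\to G/\Phi(G)$. First I would observe that $\phi$ induces the map $\phi_n\colon\operatorname{Epi}(F_n,G)\to\operatorname{Epi}(F_n,G/\Phi(G))$, which by Proposition \ref{lemma:grig} is \emph{surjective}: every generating $n$-tuple of $G/\Phi(G)$ lifts to a generating $n$-tuple of $G$. Next I would note that each elementary Nielsen move commutes with $\phi_n$, since the moves $R_{ij}^{\pm}$, $L_{ij}^{\pm}$, $I_j$ are defined by the same word operations in both groups and $\phi$ is a homomorphism; hence $\phi_n$ maps edges of $\Gamma_n(G)$ to edges (or loops) of $\Gamma_n(G/\Phi(G))$. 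Thus $\phi_n$ is a graph morphism that is surjective on vertices.

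To conclude, take two arbitrary generating $n$-tuples $\bar U,\bar V$ of $G/\Phi(G)$. By surjectivity of $\phi_n$ choose lifts $U,V\in\operatorname{Epi}(F_n,G)$ with $\phi_n(U)=\bar U$ and $\phi_n(V)=\bar V$. Since $\Gamma_n(G)$ is connected, there is a finite chain of elementary Nielsen moves transforming $U$ into $V$. Applying $\phi_n$ to this chain and using that $\phi_n$ intertwines the Nielsen moves, I obtain a corresponding chain in $\Gamma_n(G/\Phi(G))$ joining $\bar U$ to $\bar V$ (collapsing any edges whose two endpoints coincide into trivial steps). Therefore $\bar U$ and $\bar V$ lie in the same connected component, and since they were arbitrary, $\Gamma_n(G/\Phi(G))$ is connected.

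I do not anticipate a serious obstacle here: the corollary is essentially a formal combination of an already-proved implication with the lifting property of Proposition \ref{lemma:grig}. The one point requiring mild care is the verification that Nielsen moves are genuinely compatible with the homomorphism $\phi$ at the level of tuples—so that a path in $\Gamma_n(G)$ projects to a path in $\Gamma_n(G/\Phi(G))$—but this is routine since the moves are given by universal word formulas preserved under any group homomorphism. It is also worth remarking that the hypothesis $G\in\mathfrak{C}$ enters only through Proposition \ref{lemma:grig}, which is where membership in $\mathfrak{C}$ guarantees surjectivity of the lift.
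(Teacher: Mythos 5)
Your proof is correct and follows exactly the route the paper intends: the forward implication is Proposition \ref{evans}, and the converse is obtained by lifting generating tuples via the surjectivity of $\phi_n$ from Proposition \ref{lemma:grig} and projecting Nielsen paths down the quotient map. The paper states this corollary without writing out the details, so your argument is simply the explicit version of the same reasoning.
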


\begin{corollary}
Let $G$ be in $\mathfrak{C}$ and suppose $\Gamma_n(G)$ is connected for some $n\geq \operatorname{rank}(G)$. Then $\Gamma_k(G)$ is connected for all $k\geq n+1$.
\end{corollary}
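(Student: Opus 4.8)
The plan is to reduce the statement to an application of Corollary \ref{con} together with the abelian base case furnished by Theorem \ref{thm:diakgrah}, mirroring the structure of Proposition \ref{conn} but in the Nielsen setting. First I would observe that it suffices to treat the single step $k=n+1$, since the general case $k\geq n+1$ then follows by induction on $k$. Fix $\operatorname{rank}(G)=d$ and a generating tuple $G=\langle x_1,\dots,x_d\rangle$. The hypothesis is that $\Gamma_n(G)$ is connected for some $n\geq \operatorname{rank}(G)$, and the goal is to deduce connectedness of $\Gamma_{n+1}(G)$.

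The key idea is to push the connectedness hypothesis down to the Frattini quotient and then lift it back up one level higher. Since $G$ is in $\mathfrak{C}$, Proposition \ref{lemma:grig} tells me that the natural map $\phi_n\colon \operatorname{Epi}(F_n,G)\to \operatorname{Epi}(F_n,G/\Phi(G))$ is surjective, and this surjectivity is exactly what is needed to transport connectedness of $\Gamma_n(G)$ to connectedness of $\Gamma_n(G/\Phi(G))$: any two generating $n$-tuples of $G/\Phi(G)$ lift to generating $n$-tuples of $G$, which are Nielsen equivalent by assumption, and Nielsen equivalence is preserved under the quotient map, so their images are Nielsen equivalent in $G/\Phi(G)$. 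Thus $\Gamma_n(G/\Phi(G))$ is connected.

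Now $G/\Phi(G)$ is an elementary abelian-type quotient: because $G$ is in $\mathfrak{C}$, every maximal subgroup is normal with quotient $\mathbb{Z}/p\mathbb{Z}$, so $G/\Phi(G)$ is a (finitely generated) abelian group in which $\Gamma_n$ and $\Gamma_{n+1}$ are governed by Theorem \ref{thm:diakgrah}. Here is where I invoke the stability statement for abelian groups: once $\Gamma_n(G/\Phi(G))$ is connected for some $n\geq \operatorname{rank}(G/\Phi(G))$, Theorem \ref{thm:diakgrah} guarantees that $\Gamma_{n+1}(G/\Phi(G))$ is connected as well, since adding generators only helps (the connectedness threshold for abelian groups is monotone). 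Note $\operatorname{rank}(G/\Phi(G))=\operatorname{rank}(G)=d$ because $\Phi(G)$ consists of nongenerators, so the rank hypotheses match up.

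Finally I would feed this back into Corollary \ref{con}. Since $n+1\geq \operatorname{rank}(G)+1$, the corollary applies and says $\Gamma_{n+1}(G)$ is connected if and only if $\Gamma_{n+1}(G/\Phi(G))$ is connected; the latter I have just established, which completes the step. The main obstacle, and the point requiring the most care, is the abelian stability input: I must confirm that connectedness of $\Gamma_n(G/\Phi(G))$ really does propagate upward to $\Gamma_{n+1}(G/\Phi(G))$ under Theorem \ref{thm:diakgrah}, i.e. that there is no pathological value of $n$ at which connectedness holds but fails one level higher. Inspecting the statement of Theorem \ref{thm:diakgrah} shows the connected range is an upward-closed set of $n$, so this monotonicity holds and the argument closes.
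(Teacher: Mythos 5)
Your argument is correct and takes essentially the same route as the paper's own one-line proof: pass to the abelian quotient $G/\Phi(G)$ via Corollary \ref{con} and invoke Theorem \ref{thm:diakgrah}. The only remark is that your intermediate step of descending connectedness of $\Gamma_n(G)$ to $\Gamma_n(G/\Phi(G))$ is logically superfluous, since $\Gamma_k(G/\Phi(G))$ is already connected for every $k\geq \operatorname{rank}(G)+1=\operatorname{rank}(G/\Phi(G))+1$ by Theorem \ref{thm:diakgrah} alone, so the hypothesis that $\Gamma_n(G)$ is connected is never actually needed.
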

\begin{proof} Since $G$ is in $\mathfrak{C}$ then $[G,G]\leq \Phi(G)$. Use that $G/\Phi(G)$ is abelian and apply Theorem \ref{thm:diakgrah} and Corollary \ref{con}. \end{proof}

\subsection{Nielsen equivalence for $p$-groups in class $\mathfrak{C}$}

In this chapter we will prove Theorem \ref{thm1.3}. In view of the Corollary \ref{con} it is desirable to analyse the quotient of the group by its Frattini subgroup.
\begin{proposition}
Let $G$ be a finitely generated $p$-group from $\mathfrak{C}$. Then \begin{center} $G/\Phi(G)\cong (\mathbb{Z}/p\mathbb{Z})^{\operatorname{rank}(G)}.$\end{center}
\label{prop:frat}
\end{proposition}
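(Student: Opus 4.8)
The plan is to show that $G/\Phi(G)$ is a finite-dimensional vector space over $\mathbb{F}_p$ whose dimension is exactly $\operatorname{rank}(G)$, which is precisely the assertion. I would carry this out in three stages: first prove that $G/\Phi(G)$ is an elementary abelian $p$-group, then observe it is finite, and finally identify its dimension with $\operatorname{rank}(G)$ via the non-generator property of the Frattini subgroup.

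First I would analyse a single maximal subgroup $M$ of $G$. Since $G \in \mathfrak{C}$, the subgroup $M$ is normal, so by the correspondence theorem the only subgroups of $G/M$ are the trivial one and $G/M$ itself; a group with no proper nontrivial subgroups is cyclic of prime order, whence $G/M \cong \mathbb{Z}/q\mathbb{Z}$ for some prime $q$. Because $G$ is a $p$-group, every element of the quotient $G/M$ has $p$-power order, which forces $q = p$. In particular $g^p \in M$ for every $g \in G$. Intersecting over all maximal subgroups gives $g^p \in \Phi(G)$ for all $g$, so $G/\Phi(G)$ has exponent dividing $p$; combined with $[G,G] \le \Phi(G)$ from Proposition \ref{obs}, the quotient $G/\Phi(G)$ is abelian of exponent $p$, i.e. a vector space over $\mathbb{F}_p$. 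As a quotient of the finitely generated group $G$ it is finitely generated, hence $G/\Phi(G) \cong (\mathbb{Z}/p\mathbb{Z})^d$ for some finite $d$.

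It then remains to prove $d = \operatorname{rank}(G)$, for which I would invoke part (1) of the Frattini lemma above (the non-generator property). A minimal generating tuple of $G/\Phi(G)$ of length $d$ lifts to $g_1,\dots,g_d \in G$ with $\langle g_1,\dots,g_d\rangle \Phi(G) = G$. Since $G$ is finitely generated, any proper subgroup lies inside some maximal subgroup, so a subgroup $H$ with $H\Phi(G) = G$ cannot be proper (otherwise $H \le M$ would give $H\Phi(G) \le M < G$). Thus $\langle g_1,\dots,g_d\rangle = G$, giving $\operatorname{rank}(G) \le d$; conversely any generating tuple of $G$ projects onto a generating tuple of $G/\Phi(G)$, so $d \le \operatorname{rank}(G)$, and therefore $d = \operatorname{rank}(G) = \dim_{\mathbb{F}_p} G/\Phi(G)$.

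The main obstacle is the first stage: pinning down that each maximal quotient is cyclic of order exactly $p$, rather than merely of some prime order. This is the step where both hypotheses are used simultaneously — membership in $\mathfrak{C}$ supplies normality and hence the simple abelian quotient, while the $p$-group condition forces the prime to be $p$. The remaining reasoning is the standard Burnside-basis-theorem argument, transported to the possibly infinite setting by appealing to finite generation to guarantee that proper subgroups sit inside maximal ones.
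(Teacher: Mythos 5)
Your proof is correct, and its overall shape --- show that $G/\Phi(G)$ is elementary abelian of exponent $p$, then identify its $\mathbb{F}_p$-dimension with $\operatorname{rank}(G)$ via the non-generator property --- matches the paper's. Two sub-steps are handled differently, and in both cases more economically. First, to see that a maximal subgroup $M$ has index $p$, you note that normality of $M$ plus maximality forces $G/M$ to have no proper nontrivial subgroups, hence to be cyclic of prime order, and the $p$-group hypothesis pins that prime down to $p$; the paper instead supposes $|G/M|=p^k$ with $k\geq 2$ and derives a contradiction by inserting an intermediate subgroup $M_1=\langle g,M\rangle$ between $M$ and $G$. Second, and more substantially, the paper devotes a paragraph to proving that $\Phi(G)$ has finite index, citing Hall's theorem that a finitely generated group has only finitely many subgroups of a given index and then intersecting the finitely many maximal subgroups. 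You bypass this entirely: once $G/\Phi(G)$ is known to be abelian of exponent $p$ and finitely generated (as a quotient of $G$), it is automatically a finite-dimensional vector space over $\mathbb{F}_p$, so no separate finiteness argument is required. Your closing step --- lift a generating $d$-tuple of $G/\Phi(G)$ and use that every proper subgroup of a finitely generated group lies in a maximal one, which is normal and contains $\Phi(G)$ --- is exactly the content of the paper's Proposition \ref{lemma:grig}, which the paper invokes at the corresponding point; you have simply inlined its proof. Both routes are valid; yours is shorter and avoids one external citation.
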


\begin{proof} First we prove that $\Phi(G)$ is of finite index in $G$ and then show that $G/\Phi(G)\cong (\mathbb{Z}/p\mathbb{Z})^{\operatorname{rank}(G)}.$ 

Let $M$ be a proper maximal subgroup of G. Then by assumption $M\lhd G$ and $G/M$ is an abelian $p$-group, hence finite. Let us prove that $[G:M]=p$. Assume that $[G:M]\neq p$. Since $G$ is a $p$-group it must be $p^k, k\geq 2$. Then $g^{p^k} \in M$ for any $g\in G$. We choose $g \notin M$. There is $0\leq r<k$ such that $g^{p^{r+1}}\in M$ and $g^{p^r}\notin M$. Consider the subgroup $M_1=\langle g, M \rangle$. Obviously $M<M_1$ and $M_1\neq G$. Indeed, assuming that $M_1=G$ we get by the third isomorphism theorem that $|\{e\}|=\frac{|G/M|}{|M_1/M|}=\frac{p^k}{p^r}=p^{k-r}\neq 1$. So $M<M_1<G$ which contradicts the maximality of $M$. Hence $G/M$ has order $p$. 

Since $G$ is finitely generated, there are only finitely many subgroups of a given index \cite{Hall}. Hence there are finitely many maximal subgroups of $G$. We use that the intersection of finitely many subgroups of finite index is a subgroup of finite index \cite{Hall} and conclude that the index of the Frattini subgroup $\Phi(G)$ is finite.  

Since $G$ is in $\mathfrak{C}$, $[G,G]\leq \Phi(G)$ by Proposition \ref{obs}. Also for given $g \in G$ we have $g^p \in M$ for any maximal subgroup M and thus $g^p \in \Phi(G)$. Moreover $G/\Phi(G)$ is finite and we conclude that $G/\Phi(G)$ is an elementary abelian group. By the fundamental theorem of abelian groups $G/\Phi(G)$ is a direct product of nontrivial cyclic subgroups, each of which must have order $p$: $$G/\Phi(G)\cong (\mathbb{Z}/p\mathbb{Z})^{\ell},$$ where $\ell\leq d=\operatorname{rank}(G)$. Suppose that $\ell<d$ and there is a generating $\ell$-tuple $(s_1,...,s_{\ell})$ of $G/\Phi(G)$. By Proposition \ref{lemma:grig} $(s_1,...,s_{\ell})$ has preimage in $\operatorname{Epi}(F_{\ell},G)$ therefore $\operatorname{rank}(G)<d$. We obtain a contradiction. \end{proof}

\begin{proof}[Proof of Theorem \ref{thm1.3}] Use Proposition \ref{prop:frat}, Theorem \ref{thm:diakgrah} and Corollary \ref{con}. \end{proof}
\begin{proof}[Proof of Corollary \ref{gs}] Since $G/\Phi(G)\cong (\mathbb{Z}/p\mathbb{Z})^2$ \cite{Per5}, the graph $\Gamma_2(G/\Phi(G))$ has $\frac{p-1}{2}$ connected components by Theorem \ref{thm:diakgrah}. If two generating tuples of $G$ are Nielsen equivalent in $G$ then their images in $G/\Phi(G)$ are also Nielsen equivalent. We deduce that there are at least $\frac{p-1}{2}\geq 2$ connected components of $\Gamma_2(G)$. \end{proof}

\subsection{Nielsen equivalence of free nilpotent and Heisenberg groups}\label{NilHeis}

Let us recall the following definitions:
\begin{itemize} 
\item A subgroup $K$ of a group $G$ is called \emph{characteristic} if $K$ is invariant by all automorphisms of $G$;
\item Let $W_{\mu}(x_{\lambda})$, $\mu=1,2,...$, be a set of words in the symbols $x_{\lambda}$, $ \lambda=1,2,...$. Then the $\{W_{\mu}\}$-\emph{verbal subgroup} $G(W_{\mu})$ of a group $G$ is the subgroup of $G$ generated by all elements of the form $W_{\mu}(g_{\lambda})$, where  $g_{\lambda}$ ranges over $G$.
\end{itemize}
Any verbal subgroup is characteristic, but the converse is false.


Let $F_n$ be the free group of rank $n\geq 2$ and let $V$ be a verbal subgroup of $F_n$. Then, in particular, $V$ is characteristic and the natural mapping $F_n\rightarrow F_n/V$ induces a homomorphism $\rho: \operatorname{Aut}(F_n)\rightarrow \operatorname{Aut}(F_n/V)$. The elements of the image of $\rho$ are called \emph{tame automorphisms} of $F_n/V$. Since $V$ is a verbal subgroup, a map between two generating $n$-tuples of $F_n/V$ can be extended to an automorphism of $F_n/V$ (see \cite{MaKS}, Theorem 1.1). Thereby the transitivity of the action of $\operatorname{Aut}F_n$ on the set of generating $n$-tuples is reduced to the question whether all automorphisms of $F_n/V$ are tame.

\medskip
Recall that for a finitely generated nilpotent group $G$, $\Gamma_n(G)$ is connected for $n\geq \operatorname{rank}(G)+1$ \cite{Evans}. We will discuss $\Gamma_n(G)$, $n=\operatorname{rank}(G)$ for free nilpotent groups and Heisengroup groups.

Consider the lower central series of $F_n$: $F_n \rhd [F_n,F_n]\rhd [F_n,[F_n,F_n]]...$, every member of which is a verbal subgroup of $F_n$. Let $\gamma_{c}F_n$ be its $c$-th term. The group $F_n(c)=F_n/\gamma_{c+1}F_n$ is the free nilpotent group of rank $n$ and class $c$.

\begin{theorem} [\cite{Andreadakis,Bach}] 
 Let $F_n(c)$ be a free nilpotent group of rank $n\geq 2$ and class $c$. Then 

- all elements of $\operatorname{Aut}F_n(c)$ are tame for $c=1$ or $c=2$;

- if $c\geq 3$ then there exist non-tame automorphisms in $\operatorname{Aut}F_n(c)$.
\label{thm:and}
\end{theorem}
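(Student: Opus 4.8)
The plan is to compare the tame subgroup $T=\operatorname{im}\rho\le\operatorname{Aut}F_n(c)$ with the whole automorphism group layer by layer along the lower central series. Write $H=F_n/\gamma_2F_n\cong\mathbb{Z}^n$ and $\mathcal L_k=\gamma_kF_n/\gamma_{k+1}F_n$ for the degree-$k$ part of the free Lie ring on $H$. First I would dispose of the linear part. The map $\operatorname{Aut}F_n(c)\to\operatorname{Aut}H=GL_n(\mathbb{Z})$ is well defined since $H$ is a characteristic quotient, and each elementary generator of $GL_n(\mathbb{Z})$ (transvections, sign changes, permutations) is the image of an elementary Nielsen automorphism of $F_n$, hence tame; so the tame part already surjects onto $GL_n(\mathbb{Z})$. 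Consequently $T=\operatorname{Aut}F_n(c)$ if and only if every IA-automorphism, i.e.\ one acting trivially on $H$, is tame, and the whole problem reduces to the IA-subgroup $\operatorname{IA}(F_n(c))$.

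For $c=1$ there is nothing beyond the linear part: $F_n(1)=H$ and $\operatorname{Aut}H=GL_n(\mathbb{Z})$ is tame. For $c=2$ the group $\gamma_2F_n(2)=\mathcal L_2\cong\wedge^2H$ is central, so an IA-automorphism is determined by $x_i\mapsto x_ic_i$ with $c_i\in\wedge^2H$, and composition is additive in the $c_i$; thus $\operatorname{IA}(F_n(2))\cong\operatorname{Hom}(H,\wedge^2H)$. The elementary automorphisms $x_i\mapsto x_i[x_j,x_k]$, together with the conjugations $x_i\mapsto x_j^{-1}x_ix_j$, are tame and their first Johnson values $x_i^{*}\otimes(x_j\wedge x_k)$ already span $\operatorname{Hom}(H,\wedge^2H)$; hence every IA-automorphism is tame and the first assertion follows.

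For $c\ge3$ I would produce a non-tame automorphism inside the top layer of the Andreadakis filtration. Since $\gamma_{c+1}F_n(c)=1$, the subgroup $\mathcal L_c=\gamma_cF_n(c)$ is central, and because $F_n(c)$ is relatively free in the variety of nilpotent groups of class $\le c$, every assignment $x_i\mapsto x_iz_i$ with $z_i\in\mathcal L_c$ extends to an automorphism. This identifies the subgroup $A_{c-1}(F_n(c))$ of automorphisms acting trivially modulo $\gamma_c$ with the full module $\operatorname{Hom}(H,\mathcal L_c)$ via the Johnson homomorphism $\tau_{c-1}$. A tame automorphism lying in this layer is the image of some $\Phi\in\operatorname{Aut}F_n$ trivial on $F_n/\gamma_c$, i.e.\ $\Phi\in A_{c-1}(F_n)$, so the tame part of the top layer is exactly $\operatorname{im}\tau_{c-1}^{F_n}\subseteq\operatorname{Hom}(H,\mathcal L_c)$. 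It therefore suffices to show that this free-group Johnson image is a proper submodule, and to pick any central shift $x_i\mapsto x_iz_i$ whose Johnson value lies outside it.

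The heart of the matter — and the step I expect to be the main obstacle — is the nonsurjectivity of the free-group Johnson homomorphism in degree $c-1\ge2$, i.e.\ the nonvanishing of $\operatorname{coker}\tau_{c-1}^{F_n}$. The case $n=2$ is transparent: here $\operatorname{IA}(F_2)=\operatorname{Inn}(F_2)$, so $\operatorname{im}\tau_{c-1}^{F_2}$ consists only of the inner derivations $\ell\mapsto(x_i\mapsto[\ell,x_i])$ with $\ell\in\mathcal L_{c-1}$, a submodule of rank at most $\operatorname{rank}\mathcal L_{c-1}$, whereas $\operatorname{Hom}(H,\mathcal L_c)$ has rank $2\operatorname{rank}\mathcal L_c$; the Witt formula gives $\operatorname{rank}\mathcal L_{c-1}<2\operatorname{rank}\mathcal L_c$ for every $c\ge3$, so the containment is strict and, for instance, $x_1\mapsto x_1[[x_1,x_2],x_1]$ (fixing the remaining generators) is non-tame. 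For general $n$ the properness is the genuinely nontrivial input: one detects $\operatorname{coker}\tau_{c-1}^{F_n}\ne0$ through a trace-type map $\operatorname{Hom}(H,\mathcal L_c)\to S^{c-1}H$ that annihilates $\operatorname{im}\tau_{c-1}^{F_n}$ but is itself nonzero. This is precisely the phenomenon isolated by Andreadakis and Bachmuth, and it is what makes $c\ge3$ genuinely different from $c\le2$.
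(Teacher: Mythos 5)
First, a point of comparison: the paper does not prove Theorem \ref{thm:and} at all --- it is quoted from Andreadakis and Bachmuth, and the only argument in this direction that the paper actually carries out is Proposition \ref{and}, where Andreadakis's explicit tameness criterion for central automorphisms of $F_2(3)$ is itself taken as a black box. So your proposal is not competing with an in-paper proof; the question is whether it stands on its own. Its architecture is the right one: the reduction to $\operatorname{IA}$-automorphisms via surjectivity of the tame part onto $GL_n(\mathbb{Z})$ is correct; the $c=1$ case is trivial; the $c=2$ case is complete once you note that the Johnson images of the Magnus generators (partial conjugations for $i\in\{j,k\}$, the maps $x_i\mapsto x_i[x_j,x_k]$ for $i\notin\{j,k\}$) span $\operatorname{Hom}(H,\wedge^2H)$; and the identification of the tame part of the top layer $A_{c-1}(F_n(c))\cong\operatorname{Hom}(H,\mathcal{L}_c)$ with $\operatorname{im}\tau_{c-1}^{F_n}$ is sound. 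For $n=2$ your rank count via $\operatorname{IA}(F_2)=\operatorname{Inn}(F_2)$ and the Witt formula genuinely finishes the job (and the specific example $x_1\mapsto x_1[[x_1,x_2],x_1]$ can be checked directly not to be an inner derivation, though you should say so rather than let it ride on the rank count alone).

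The genuine gap is the case $n\geq 3$, $c\geq 3$. There you reduce the theorem to the nonvanishing of $\operatorname{coker}\tau_{c-1}^{F_n}$ and then assert the existence of a trace-type map $\operatorname{Hom}(H,\mathcal{L}_c)\to S^{c-1}H$ that kills $\operatorname{im}\tau_{c-1}^{F_n}$ and is nonzero. That assertion \emph{is} the theorem in this range: the vanishing of the trace on the Johnson image is a nontrivial computation (Fox derivatives / Magnus expansion, in modern language Morita's trace), and you neither define the map nor verify either property. You correctly flag this as ``the main obstacle,'' but flagging it does not close it; as written, the argument for general $n$ is circular in the sense that the unproved step carries all the content that Andreadakis and Bachmuth actually supplied. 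A self-contained route would be either to define the contraction $H^*\otimes\mathcal{L}_c\hookrightarrow H^*\otimes H^{\otimes c}\to H^{\otimes(c-1)}\to S^{c-1}H$ and verify $\operatorname{Tr}\circ\tau_{c-1}=0$ on the generators of $A_{c-1}(F_n)$, or, closer to the original sources, to exhibit one explicit central automorphism of $F_n(3)$ and rule out tame preimages directly, then push it up to class $c$.
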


The Theorem \ref{thm1.4} is a straighforward corollary from the Theorem \ref{thm:and}. \\Moreover in the case $n=2$, $c=3$, the result of Andreadakis implies that there are infinitely many Nielsen equivalence classes (Proposition \ref{and} below).
\begin{proposition}
For the free nilpotent group $F_2(3)$ there are infinitely many Nielsen equivalence classes.
\label{and}
\end{proposition}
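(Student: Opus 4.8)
The plan is to convert the statement into a computation of the index of the tame subgroup of $\operatorname{Aut}(G)$, where $G=F_2(3)$, and then to exhibit an explicit infinite family of pairwise inequivalent generating pairs. First I would make precise the correspondence already implicit in the discussion preceding Theorem \ref{thm:and}. Since $G$ is relatively free of rank $2$ in the variety of nilpotent groups of class $\le 3$ and is Hopfian (being finitely generated nilpotent), every epimorphism $F_2\to G$ factors as an automorphism of $G$ followed by the canonical projection; hence generating pairs of $G$ are in bijection with $\operatorname{Aut}(G)$. Under this bijection an elementary Nielsen move acts by right translation by the corresponding tame automorphism $\rho(\nu)$, so two generating pairs are Nielsen equivalent exactly when the associated automorphisms lie in the same coset of the tame subgroup $T=\operatorname{im}(\rho)\le\operatorname{Aut}(G)$. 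Thus the number of Nielsen equivalence classes equals the index $[\operatorname{Aut}(G):T]$, and it suffices to prove this index is infinite.

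Next I would fix coordinates in the lower central quotients. Writing $c=[x,y]$, $d_1=[[x,y],x]$ and $d_2=[[x,y],y]$, the centre $Z(G)=\gamma_3(G)$ is free abelian of rank $2$ with basis $d_1,d_2$, while $\gamma_2(G)$ is free abelian of rank $3$ with basis $c,d_1,d_2$. I then consider the family of assignments $\alpha_k\colon x\mapsto x\,d_1^{\,k},\ y\mapsto y$ for $k\in\mathbb{Z}$. Each $\alpha_k$ fixes the generators modulo $\gamma_2(G)$, so the pair $(x\,d_1^{\,k},y)$ generates $G$ and $\alpha_k$ is a genuine automorphism; since $d_1$ has infinite order, distinct values of $k$ give distinct generating pairs, and $\alpha_k^{-1}\alpha_{k'}=\alpha_{k'-k}$ because $d_1$ is central. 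It remains to show that $\alpha_m$ is tame only for $m=0$.

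The key step is to identify the tame $\operatorname{IA}$-automorphisms. A tame automorphism $\rho(\sigma)$ is $\operatorname{IA}$ (trivial on $G^{\mathrm{ab}}=F_2^{\mathrm{ab}}$) precisely when $\sigma\in\operatorname{IA}(F_2)$, and by the classical theorem of Nielsen $\operatorname{IA}(F_2)=\operatorname{Inn}(F_2)$; consequently $T\cap\operatorname{IA}(G)=\operatorname{Inn}(G)$. As each $\alpha_m$ is $\operatorname{IA}$, it is tame if and only if it is inner, and I would rule out innerness by a direct centralizer computation: an inner automorphism $\iota_w$ fixing $y$ forces $w\in C_G(y)$, and a short commutator calculation gives $C_G(y)=\langle y\rangle\,Z(G)$. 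Writing $w=y^t z$ with $z\in Z(G)$ one finds $\iota_w(x)\equiv x\,c^{-t}\pmod{\gamma_3}$, so matching the (trivial) $c$-component of $\alpha_m(x)=x\,d_1^{\,m}$ forces $t=0$, whence $w$ is central and $\iota_w=\operatorname{id}$, giving $m=0$. Therefore $\alpha_k^{-1}\alpha_{k'}$ is tame only when $k=k'$, the pairs $(x\,d_1^{\,k},y)$ represent pairwise distinct cosets of $T$, and $\Gamma_2(F_2(3))$ has infinitely many connected components.

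The main obstacle is concentrated in the last paragraph: the whole argument hinges on showing that the nontrivial members of the central family $\alpha_m$ are non-tame, and the cleanest route is through the identity $T\cap\operatorname{IA}(G)=\operatorname{Inn}(G)$ together with the computation of $C_G(y)$. This is precisely where the structural analysis of $\operatorname{Aut}(F_2(3))$ due to Andreadakis (Theorem \ref{thm:and}) is used: it is the proper inclusion of the tame subgroup that the coordinate computation above upgrades to infinite index.
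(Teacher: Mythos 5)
Your argument is correct, and it follows the same global strategy as the paper --- identify generating pairs of $G=F_2(3)$ with elements of $\operatorname{Aut}(G)$ via Hopficity, reduce Nielsen equivalence to cosets of the tame subgroup $T$, and exhibit the central family $\alpha_k\colon x\mapsto x\,d_1^{\,k},\ y\mapsto y$ (the paper's $\alpha^k$) --- but it justifies the crucial non-tameness step by a genuinely different route. The paper quotes Andreadakis's explicit criterion for which \emph{central} automorphisms of $F_2(3)$ are tame (the condition $\lambda_1=\mu_2$, $\lambda_2=\mu_1=0$ in \eqref{autoand}) and simply checks that $\alpha^k$ violates it for $k\neq 0$. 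You instead observe that $T\cap\operatorname{IA}(G)=\operatorname{Inn}(G)$, which follows from Nielsen's classical theorem $\operatorname{IA}(F_2)=\operatorname{Inn}(F_2)$ (not from Andreadakis, as your closing paragraph suggests --- that attribution should be corrected, though it does not affect the mathematics), and then rule out innerness of $\alpha_m$ by computing $C_G(y)=\langle y\rangle Z(G)$ and matching the $[x,y]$-coordinate. Your version is more self-contained: it replaces the imported tameness criterion by an elementary commutator calculation, at the cost of being specific to rank $2$ (for $n\geq 3$ the equality $\operatorname{IA}(F_n)=\operatorname{Inn}(F_n)$ fails, whereas Andreadakis's analysis extends). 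Both proofs correctly conclude that the classes $T\alpha_k$ are pairwise distinct, hence $[\operatorname{Aut}(G):T]$ is infinite.
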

\begin{proof} Andreadakis \cite{Andreadakis} showed that for $F_2(3)=\langle x,y\rangle$, the central automorphism of $F_2(3)$ \begin{equation}\label{autoand}   \left\{  
           \begin{array}{rcl}  
            x&\rightarrow& x[y,x,x]^{\lambda_1}[y,x,y]^{\lambda_2} \\  
         y&\rightarrow& y[y,x,x]^{\mu_1}[y,x,y]^{\mu_2} \\  
           \end{array}   
           \right . \end{equation} is tame if $\lambda_1=\mu_2$ and $\lambda_2=\mu_1=0$.
We consider a non-tame automorphism of $F_2(3)$: \begin{equation*}\alpha: \left\{  
           \begin{array}{rcl}  
            x&\rightarrow& x[y,x,x]\\ 
            y&\rightarrow& y
           \end{array}   
           \right .,\end{equation*}
and show that there is no $\sigma \in AutF_2$ such that $\alpha^i=\sigma \alpha^j$, for $i>j$. Equivalently, there is no  $\sigma \in AutF_2$ such that $\alpha^k=\sigma$ for an arbitrary positive integer $k$. By the criterion stated above the automorphism $\alpha^k$ is tame if it is trivial. Consider $\alpha^2(x)=x[y,x,x][y,x[y,x,x],x[y,x,x]]=x[y,x,x]^2$. More generally, $$\alpha^k: \left\{  
           \begin{array}{rcl}  
            x&\rightarrow& x[y,x,x]^k\\ 
            y&\rightarrow& y
           \end{array}   
           \right .$$ Hence $\alpha^k$ is not trivial for any $k>0$. \end{proof}

It is easy to see that there is just one Nielsen equivalence class if $c=1$ not using Theorem \ref{thm1.4}. $F_n(1)\cong \mathbb{Z}^n$ and $\operatorname{Epi}(F_n, \mathbb{Z}^n)=GL_n(\mathbb{Z})$. Moreover $\rho(\operatorname{Aut}F_n)=GL_n(\mathbb{Z})$ and the action of $GL_n(\mathbb{Z})$ is obviously transitive on itself. For $n=c=2$ the nilpotent group $F_2(2)$ is isomorphic to the Heisenberg group $\mathcal{H}_1=\langle x,y| [x,[x,y]],[y,[x,y]] \rangle$ \cite{Kahn}. 
\begin{corollary}
For the first Heisenberg group $\mathcal{H}_1$ the graph $\Gamma_2(\mathcal{H}_1)$ is connected.
\end{corollary}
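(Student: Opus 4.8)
The plan is to recognize the first Heisenberg group as a free nilpotent group of the relevant rank and class, and then to invoke the classification of tame automorphisms already established. As recalled just above (via Kahn), one has an isomorphism $\mathcal{H}_1 \cong F_2(2)$, where $F_2(2) = F_2/\gamma_3 F_2$ is the free nilpotent group of rank $2$ and nilpotency class $2$. Since connectedness of the Nielsen graph $\Gamma_n(G)$ is equivalent to transitivity of the action of $\operatorname{Aut}F_n$ on $\operatorname{Epi}(F_n, G)$, and since any group isomorphism $G \cong G'$ induces an $\operatorname{Aut}F_n$-equivariant bijection $\operatorname{Epi}(F_n, G) \to \operatorname{Epi}(F_n, G')$, it suffices to prove that $\Gamma_2(F_2(2))$ is connected.

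First I would observe that $\gamma_3 F_2$ is a verbal subgroup of $F_2$, so the reduction explained earlier in this section applies: transitivity of $\operatorname{Aut}F_2$ on the generating $2$-tuples of $F_2(2)$ is equivalent to the assertion that every automorphism of $F_2(2)$ is tame. At this point the substantive work is already done by Theorem \ref{thm:and}: for class $c = 2$ all elements of $\operatorname{Aut}F_2(2)$ are tame. Equivalently, this is precisely the case $c = 2$ of Theorem \ref{thm1.4}, which yields connectedness of $\Gamma_2(F_2(2))$ directly.

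Transporting this conclusion across the isomorphism $\mathcal{H}_1 \cong F_2(2)$ then gives connectedness of $\Gamma_2(\mathcal{H}_1)$, finishing the argument. The only content beyond the cited theorems is the identification $\mathcal{H}_1 \cong F_2(2)$, and the sole (minor) point to check is that this isomorphism is compatible with the Nielsen-equivalence structure, i.e.\ that it carries generating pairs to generating pairs and intertwines the elementary Nielsen moves. Since any group isomorphism automatically respects generation and commutes with the $\operatorname{Aut}F_n$-action on epimorphisms, this compatibility is immediate. I therefore expect no genuine obstacle here: the corollary is an essentially formal consequence of Theorem \ref{thm1.4} together with the structural fact $\mathcal{H}_1 \cong F_2(2)$, the real difficulty having been absorbed into the Andreadakis--Bachmuth classification of tame automorphisms.
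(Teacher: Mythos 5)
Your argument is correct and is exactly the paper's: the corollary is stated as an immediate consequence of the identification $\mathcal{H}_1 \cong F_2(2)$ (via Kahn) together with the case $c=2$ of Theorem \ref{thm1.4}, i.e.\ the Andreadakis--Bachmuth result that all automorphisms of $F_2(2)$ are tame. Your extra remark on equivariance of the induced bijection $\operatorname{Epi}(F_2,\mathcal{H}_1)\to\operatorname{Epi}(F_2,F_2(2))$ is a harmless formal check the paper leaves implicit.
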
 

This statement about the Heisenberg group $\mathcal{H}_1$ can be generalised to higher dimensions. \emph{The discrete Heisenberg group} $\mathcal{H}_k$, $k\geq 1$, is the group of integer matrices of the form \[v=\left(\begin{array}{ccc}
1&\bold{x}&z \\
0&I_k&\bold{y} \\
0&0&1 \end{array} \right)\]  with $\bold{x}=(x_1,...,x_k)$ a row vector of length $k$, $\bold{y}=(y_1,...,y_k)^T$ a column vector of length $k$, and $I_k$ is the $k\times k$ identity matrix.

\bigskip
\begin{proof}[Proof of Theorem \ref{thm1.5}]  
We will prove that $N_{\operatorname{rank}(\mathcal{H}_k)}(\mathcal{H}_k/\Phi(\mathcal{H}_k))$ is connected and deduce connectedness for $N_{\operatorname{rank}(\mathcal{H}_k)}(\mathcal{H}_k)$. We refer to \cite{Evans} for the case $n\geq \operatorname{rank}(\mathcal{H}_k)+1$.

Each element of $\mathcal{H}_k$ can be written as $(x_1,...,x_k, y_1,...,y_k,z)$. The identity element of $\mathcal{H}$ is $(0,0,...,0)$ and $$(x_1,...,x_k, y_1,...,y_k, z)^{-1}=(-x_1,...,-x_k,-y_1,...,-y_k, x_1 y_1+...+x_k y_k-z).$$ The group multiplication is then given by the following rule: \begin{center} $(x_1,...,x_k,y_1,...,y_k,z)(x_1 ',...,x_k ',y_1 ',...,y_k ',z')=(x_1+x_1 ',...,x_k+x_k ',y_1+y_1 ',...,y_k+y_k ',z+z'+x_1 y_1 '+...+x_k y_k ').$ \end{center}

Calculations show that $[\mathcal{H}_k,\mathcal{H}_k]=Z(\mathcal{H}_k)\cong \mathbb{Z}$ and $\operatorname{Ab}(\mathcal{H}_k)=\mathbb{Z}^{2k}$.

Observe that $\mathcal{H}_k=\langle (1,0,...,0,0), (0,1,...,0,0),...,(0,...,1,0) \rangle$ and  \\$\operatorname{rank}({\mathcal{H}_k})=2k.$ Let $\pi: \mathcal{H}_k\rightarrow \operatorname{Ab}(\mathcal{H}_k)$ be the natural epimorphism. Then more generally \begin{center} $\langle h_1,...,h_{2k}\rangle = \mathcal{H}_k$ if and only if  $\langle \pi(h_1),...,\pi(h_{2k})\rangle =\mathbb{Z}^{2k}$.\end{center} Indeed, if $\langle h_1,...,h_{2k}\rangle = \mathcal{H}_k$ then clearly $\langle \pi(h_1),...,\pi(h_{2k})\rangle =\mathbb{Z}^{2k}$. The proof of the inverse implication goes as follows. Let $\langle \pi(h_1),...,\pi(h_{2k})\rangle =\mathbb{Z}^{2k}$. Then $(\pi(h_1),...,\pi(h_{2k}))$ is Nielsen equivalent to $((\overline{1},\overline{0},...,\overline{0},\overline{0}), (\overline{0},\overline{1},...,\overline{0},\overline{0}),...,(\overline{0},...,\overline{1},\overline{0}))$ in $\mathbb{Z}^{2k}$. In other words, $(h_1,...,h_{2k})$ is Nielsen equivalent to \begin{center}$ ((1,0,...,0,m_1), (0,1,...,0,m_2),...,(0,...,1,m_{2k})$.\end{center} The last step is to show that $\langle (1,0,...,0,m_1), (0,1,...,0,m_2),...,(0,...,1,m_{2k})\rangle=\mathcal{H}_k$. We calculate $[(1,0,...,0,m_1),(0,...,1,...0,m_{k+1})]=(0,...,0,1)$. Moreover for all $s_i\in \mathbb{Z}$ letting $a=s_{2k+1}-s_1m_1-...-s_{2k}m_{2k}-s_1s_{k+1}-...-s_k s_{2k}$ we have \begin{center}
$(s_1,s_2,...,s_{2k}, s_{2k+1})=$\\$
[(1,0,...,0,m_1),(0,...,1,...0,m_{k+1})]^{a}\cdot(1,0,...,0,m_1)^{s_1} \cdot...\cdot(0,...,0,1,m_{2k})^{s_{2k}}$. \end{center} We get then that $(h_1,...,h_{2k})$ is Nielsen equivalent to a generating system of $\mathcal{H}_k$ and conclude that $(h_1,...,h_{2k})$ is also a generating system.

Since $\mathcal{H}_k$ is nilpotent group we have
$[\mathcal{H}_k,\mathcal{H}_k]\leq \Phi(\mathcal{H}_k)$ \cite{Evans}. Let us show that $\Phi(\mathcal{H}_k)\leq [\mathcal{H}_k,\mathcal{H}_k]$ or, equivalently, if $g\notin [\mathcal{H}_k,\mathcal{H}_k]$ then $g\notin \Phi(\mathcal{H}_k)$. Assume $g \notin [\mathcal{H}_k,\mathcal{H}_k]$ then $g^{ab}\neq 0$. We deduce that $\exists g_2',...,g_{2k}'\in \mathbb{Z}^{2k}$ such that $\langle g^{ab}, g_2',...,g_{2k}'\rangle=\mathbb{Z}^{2k}$. Therefore $g \notin \Phi(\mathcal{H}_k)$ and $[\mathcal{H}_k,\mathcal{H}_k]=\Phi({\mathcal{H}_k})$.

Notice that $\operatorname{Aut}F_{2k}$ acts transitively on $\mathcal{H}_k/\Phi({\mathcal{H}_k})\cong \mathbb{Z}^{2k}$. Therefore any generating $2k$-tuple of $\mathcal{H}_k$ is Nielsen equivalent to \begin{equation*}((1,0,...,0,m_1), (0,1,...,0,m_2),...,(0,...,1,m_{2k}))\end{equation*} for some integers $m_1,...,m_{2k}$. To obtain connectedness of $N_{2k}(\mathcal{H}_k)$ it is sufficient to prove that there exists $\sigma \in \operatorname{Aut}F_{2k}$ which transforms \begin{equation*}((1,0,...,0,0), (0,1,...,0,0),...,(0,...,1,0))\end{equation*} into \begin{equation*}((1,0,...,0,m_1), (0,1,...,0,m_2),...,(0,...,1,m_{2k}))\end{equation*} for any $m_i \in \mathbb{Z}$.

One calculates \begin{center}$(L^{-1}_{1,k+1}R_{1,k+1})^{m_1}((1,0,...,0,0), (0,1,...,0,0),...,(0,...,1,0))=((1,0,...,0,m_1), (0,1,...,0,0),...,(0,...,1,0)).$\end{center} 
More generally $\forall i: 1\leq i\leq k$ $$(L^{-1}_{i,k+i}R_{i,k+i})^{m_i}(0,...,1,...,0,0)=(0,...,1,...,0,m_i)$$ and $\forall k: k+1\leq i \leq 2k$
$$(L_{i,i-k}R^{-1}_{i,i-k})^{m_i}(0,...,1,...,0,0)=(0,...,1,...,0,m_i). $$ \end{proof}

\bigskip
\emph{Section de Math\'ematiques, Universit\'e de Gen\`eve \\2-4 rue du Li\`evre, 1211 Gen\`eve, Switzerland
\\email: Aglaia.Myropolska@unige.ch}

\end{document}